\newcommand{\Gnm}{G_n^M}
\definecolor{keynoteblue}{HTML}{00A2FF}
\newcommand{\ran}{\operatorname{Ran}}
\newcommand{\tran}{\text{Ran}}
\newcommand{\I}{\mathbb{I}}
\renewcommand{\ind}{\mathbb{I}}
\begin{document}

\title{Clique and cycle frequencies in a sparse random graph model with overlapping communities}
\author{Tommi Gröhn \and Joona Karjalainen \and Lasse Leskelä}
\date{Aalto University, Espoo, Finland\\ 
  \vspace{\baselineskip} \today}
\maketitle

\begin{abstract}
A statistical network model with overlapping communities can be generated as a superposition of mutually independent random graphs of varying size. The model is parameterized by the number of nodes, the number of communities, and the joint distribution of the community size and the edge probability. This model admits sparse parameter regimes with power-law limiting degree distributions and non-vanishing clustering coefficients. This article presents large-scale approximations of clique and cycle frequencies for graph samples generated by the model, which are valid for regimes with unbounded numbers of overlapping communities. Our results reveal the growth rates of these subgraph frequencies and show that their theoretical densities can be reliably estimated from data.
\end{abstract}

\section{Introduction}

Subgraph frequencies are commonly used as descriptive statistics of graphs, and are of particular interest in testing and fitting statistical network models \cite{ Frank_1979, ouadah2020degree, Picard_Daudin_Koskas_Schbath_Robin_2008}. It seems intuitive that the similarity between two observed graphs, or an observed graph and a random graph, could be measured to some extent with the numbers of small structures found in them. 
Moreover, the emergence of structures such as cliques has natural interpretations in social networks and gives insight into how networks are organized  \cite{Benson_Gleich_Leskovec_2016, Tsourakakis_Pachocki_Mitzenmacher_2017,wegner2018identifying}. Subgraph frequencies have also been used for parameter estimation by employing the method of moments, as in  \cite{Ambroise_Matias_2012, Bickel_Chen_Levina_2011, karjalainen2017moment, karjalainen2018parameter}, where one derives the expected values of suitable empirical quantities and solves the resulting nonlinear equations.

Deriving the expected subgraph densities in random graphs is generally not a trivial task, 
except for special cases, such as the Erd\H{o}s--R{\'e}nyi model $G(n,p)$. The expressions can become complicated and laborious to work with even for simple models, as illustrated in \cite{Allman_Matias_Rhodes_2011} for a class of random graph mixture models. 
Such expressions can, however, provide insight into the structure of the random graph and its parameters, and it would therefore be beneficial to have them readily available.

In this paper we study cliques and cycles, two natural classes of connected subgraphs. Cliques can be seen as basic building blocks of community structures in networks, and their role in different types of networks has been widely studied. Since cycles are subgraphs with two paths between each pair of nodes, they represent redundant information in the structure of the network. Specialized counting algorithms exist for both types of subgraphs.

We study an undirected random graph model with overlapping communities. The model is motivated by the structure of social networks, although the concept of communities is naturally extended to other types of networks \cite{guillaume2004bipartite}. The model can be viewed as a superposition of independent Erd\H{o}s--R{\'e}nyi random graphs in a sparse regime. When the number of communities $m$ grows linearly with $n$, the model introduces non-trivial dependencies between the edges in the graph: in the limit $n \to \infty$, the model allows for non-trivial clustering coefficients and degree distributions \cite{Bloznelis_Leskela_2019}, as well as tractable bidegree distributions and assortativity coefficients \cite{Bloznelis_Karjalainen_Leskela_2022}.  Clearly, the asymptotic structure of this model is very different from $G(n,p)$.
On the other hand, when $m$ grows faster than $n$, the clustering coefficient can approach zero \cite{Bloznelis_Leskela_2019}. We mainly focus on the setting where $m$ is of the same order as $n$, which is known to yield the richest model structure \cite{Bloznelis_Leskela_2019,Bloznelis_Karjalainen_Leskela_2022,vadon2019new}.

In this paper, we derive expressions for the asymptotic expected frequencies of cliques and cycles, and prove the concentration of the corresponding empirical densities under moment conditions. This reveals the growth rates of these frequencies, and shows that the theoretical subgraph densities can be reliably estimated from data. The proofs are mainly based on combinatorial and graph-theoretical arguments and employ the second-moment method. The generality of the results is in contrast with some earlier work, where each subgraph has been analyzed separately as needed. 

Random intersection graphs \cite{Bergman_Leskela_2022,Bloznelis_Kurauskas_2017,Dong_Hu_2023,Godehardt_Jaworski_2001} are obtained as a special case of our model when all communities (ER graphs) have edge probability one.  They form an important class of statistical models capable of generating sparse graph samples with power-law degree distributions and non-trivial clustering (i.e., triangle counts of the same order as edge counts), which are commonly encountered when analyzing real-world networks \cite{Watts_Strogatz_1998}.  Random intersection graphs with $m_n \asymp n$ admit such a rich statistical structure \cite{Bloznelis_2013,Bloznelis_Leskela_2019}, whereas random intersection graphs with $m_n \gg n^3$ become indistinguishable from ER graphs \cite{Brennan_Bresler_Nagaraj_2020}. However, random intersection graphs generate large numbers of cliques of various sizes \cite{Rybarczyk_Stark_2010}, and this may induce a poor fit to real-world data.  To address this issue, cliques in random intersection graphs can be replaced by ER graphs, which leads to the model studied in the present article.
Similar models have been studied in varying levels of generality in \cite{Bloznelis_Leskela_2016, Bloznelis_Leskela_2019, Bloznelis_Karjalainen_Leskela_2022, Petti_Vempala_2018-02, yang2012community}. In particular, our model can be seen as a stochastic and parsimonious variation of the AGM model studied in \cite{yang2012community}. In contrast with some earlier work, such as \cite{Bloznelis_Karjalainen_Leskela_2022}, we do not assume that the edge probabilities converge to a constant. We also mention that a closely related model, also with non-trivial edge probabilities within the communities, has been proposed in \cite{vadon2019new}.

\section{Model description}
We define an undirected random graph model on $n$ nodes with $m$ (possibly) overlapping layers. 
Fix integers $n, m \geq 1$, and a probability distribution $\pi_n$ on $\{0,\dots,n\}\times [0,1]$.
The layers $G_{n,1},\dots,G_{n,m}$ are mutually independent random graphs. For each layer $G_{n,k}$, $k=1,\ldots,m$, the size $X_{n,k}$ and strength $Y_{n,k}$ are drawn from the distribution $\pi_n$, and the node set $V(G_{n,k})$ is then drawn uniformly at random from the subsets of $\{1,\ldots,n\}$ of size $X_{n,k}$. The edges in layer $k$ are generated independently with probability $Y_{n,k}$, so that for a graph $g$ with $V(g) \subset \{1, \ldots, n\}$ we have
\[
\pr (G_{n,k} = g) = \int_{\{|V(g)|\} \times [0,1]} \binom{n}{x}^{-1} y^{|E(g)|}(1-y)^{\binom{x}{2}-|E(g)|} \, \pi_n(dx,dy).
\]

Given the list of layers $G_{n,1},\dots, G_{n,m}$, the random graph $G_n$ is defined as an undirected graph with node set $V(G_n) = \{1,\dots,n\}$ and edge set $E(G_n) = \cup_{k=1}^m E(G_{n,k})$. 

It should be noted that the strength of the layer is allowed to depend on its size.  The independent case with $X_{n,k} \sim \Bin(n,p)$ and $Y_{n,k}=1$ (for some $p \in (0,1)$) reduces to the widely studied random intersection graph. When $Y_{n,k}$ is a deterministic constant in $(0,1)$ and $X_{n,k} \sim \Bin(n,p)$, we obtain the thinned random intersection graph studied in \cite{karjalainen2018parameter}.

\section{Definitions and notation}
\label{sec:Assumptions}
We define the subgraph frequency (or \new{count}) as follows.
Let $G$ be a graph on node set $[n] := \{1,\dots,n\}$, and let $K_n$ be the complete graph on $[n]$. Denote by $\Sub(R,G)$ the collection of $R$-isomorphic subgraphs of $G$.
The subgraph frequency of $R$ in $G$ is then
\[
 N_R(G) \weq \sum_{R' \in \Sub(R,K_n)} \nquad \ind(G \supset R').
\]
In particular, not only induced subgraphs are counted according to this definition; for example, the triangle graph $K_3$ contains three copies of the 2-path $K_{1,2}$.
Throughout this paper we assume that $R$ does not depend on $n$.

We model a large network using a sequence of random graphs $(G_n: n \ge 1)$ indexed by the graph size $n$, so that $G_n$ is parameterized by $(n, m_n, \pi_n)$. 
For a distribution $\pi$ and a random vector $(X,Y) \sim \pi$ we write 
\[
(\pi)_{r,s} = \E[(X)_r Y^s],
\]
where $(X)_r = X(X-1)\ldots(X-r+1)$. 
For a graph $R=(V, \{e_1, \ldots, e_k\})$, we denote the number of edges by $|E| = k$, and the number of nodes incident to the edges  by $\norm{E}$, i.e.,
\[
\norm{E} \weq | \{v: \, \exists i \in [k] \, \text{ s.t. } \, v \in e_i  \} |.
\]
For convenience, the scale parameter $n$ is often omitted and we denote, e.g., $m=m_n$ and $G = G_n$. 

We follow the asymptotic notations of \cite{Janson_Luczak_Rucinski_2000}: For nonnegative sequences $a_n, b_n$, we write
$a_n = o(b_n)$ when $\limsup_{n \to \infty} a_n/b_n = 0$,
$a_n = O(b_n)$ when $\limsup_{n \to \infty} a_n/b_n < \infty$,
and $a_n = \Theta(b_n)$ when $a_n = O(b_n)$ and $b_n = O(a_n)$.
To avoid extensive use of parentheses, we also employ the abbreviations
$a_n \ll b_n$ for $a_n = o(b_n)$, $a_n \lesim b_n$ for $a_n = O(b_n)$,
and $a_n \asymp b_n$ for $a_n = \Theta(b_n)$.
Finally, we use the standard notation $X_n = o_\pr (1)$ to denote that $X_n \to 0$ in probability.

\section{Results}

\subsection{General subgraph frequencies}

Our first result concerns the expected frequency of a subgraph with $r$ nodes and $s$ edges.
As $n\to \infty$, the expected frequency depends heavily on the growth rate of the cross-moment $(\pi_n)_{r,s}$. The intuition behind this result is that small subgraphs are commonly formed by a single layer, and the expected number of such subgraphs is of order $m_n(\pi_n)_{r,s}$. The contribution of subgraphs formed by multiple layers is at most of the same order (but not necessarily negligible). This is guaranteed by the condition in \eqref{eq:maxcond} below, which states that the subgraphs of $R$ should not be too common within the layers, as compared to $R$ itself.

\begin{theorem}
\label{thm:mainexpectedvalue}
Let $R$ be a connected graph with $r$ nodes and $s$ edges, and assume that
the layer type distribution satisfies
\begin{equation}
 \label{eq:maxcond}
 \max_{\emptyset \ne E \subset E(R)} \left( \frac{m_n}{n} (\pi_n)_{\norm{E}, \abs{E}} \right)^{1/\abs{E}}
 \wlesim \left( \frac{m_n}{n} (\pi_n)_{r,s} \right)^{1/s}.
\end{equation}
Then the expected number of $R$-isomorphic subgraphs in the model satisfies
$\E N_R(G_n) \asymp m_n (\pi_n)_{r,s} \wedge n^r$.
\end{theorem}

In the following two remarks we discuss special cases where the condition in \eqref{eq:maxcond} can be verified for an arbitrary connected graph $R$. The first case with nonrandom layer strengths is a generalization of the setting in (\cite{karjalainen2018parameter},   Assumption 1). In the latter case, $\pi_n \to \pi$, the conditions can be further simplified for specific types of $R$, as we will show in Section \ref{sec:cliqcyc}.

\begin{remark*}
Let the layer strengths $Y_{n,k}$ be nonrandom and equal to some $y_n \in (0,1)$.
Then $(\pi_n)_{a,b} = (\pi_n)_a y_n^b$ where $(\pi_n)_a = (\pi_n)_{a,0}$.
When $m \asymp n$, Condition~\eqref{eq:maxcond} is equivalent to
$\max_{\emptyset \ne E \subset E(R)} (\pi_n)_{\norm{E}}^{1/\abs{E}} \lesim (\pi_n)_r^{1/s}$.
A simple sufficient condition for this is to assume that $(\pi_n)_r \asymp 1$.
\end{remark*}

\begin{remark*}
Assume that  $\pi_n \to \pi$ weakly with $(\pi_n)_{\norm{E}, \abs{E}} \to (\pi)_{\norm{E}, \abs{E}} < \infty$ for all $E \subset E(R)$, and some limiting layer type distribution $\pi$ such that $(\pi)_{r,s} \in (0,\infty)$.
When $m_n \lesim n$, \eqref{eq:maxcond} holds and we conclude that $\E N_R(G_n) \asymp m_n$.
\end{remark*}

Our second result verifies that the empirical subgraph frequencies are close to their expected values in a probabilistic sense when $m_n \asymp n$. In particular, it shows that the relative errors $N_R(G_n)/\E N_R(G_n)-1$ of the subgraph density estimates converge to zero in probability. Results of this type are used in the proofs of convergence for moment-based parameter estimators in, e.g., \cite{Ambroise_Matias_2012, karjalainen2018parameter}, and we will discuss an example of this type of estimator at the end of Section \ref{sec:cliqcyc}.
The assumption
$(\pi_n)_{r,s} \gg n^{-1}$
below guarantees that the expected value tends to infinity with $n$. If
$(\pi_n)_{r,s} \ll n^{-1}$,
the result would be trivial, as  $\E N_R(G_n) \to 0$ by Theorem \ref{thm:mainexpectedvalue}.
In the following we denote 
\begin{equation}
 \label{eq:pinmdef}
 (\pi_n^M)_{a,b}
 = \E \Big((X_n)_a Y_n^b \I(X_n>M) \Big)
 \qquad \text{where $(X_n,Y_n) \sim \pi_n$},
\end{equation}
and note that the condition in \eqref{eq:UI} below is similar to the definition of uniform integrability.

\begin{theorem}
\label{thm:mainvariance}
Let $R$ be a connected graph with $r$ nodes and $s$ edges.
Assume that $m_n \asymp n$ and that the layer type distribution satisfies
$(\pi_n)_{r,s} \gg n^{-1}$ and $(\pi_n)_{0,2s} \lesim (\pi_n)_{r,s}^2$ together with
\begin{equation}
 \label{eq:maxcondLinear}
 \max_{\emptyset \ne E \subset E(R)} (\pi_n)_{\norm{E}, \abs{E}}^{1/\abs{E}}
 \wlesim (\pi_n)_{r,s}^{1/s}
\end{equation}
and
\begin{equation}
 \label{eq:UI}
 \sup_n \max_{E \subset E(R)} (\pi_n^M)_{\norm{E}, \abs{E}}
 \wto 0
 \qquad \text{as $M \to \infty$}.
\end{equation}
Then the number of $R$-isomorphic subgraphs in the model is concentrated according to $N_R(G_n) = \left(1 + o_{\pr}(1) \right) \E N_R(G_n)$.
\end{theorem}

\subsection{Clique and cycle frequencies}
\label{sec:cliqcyc}
We now turn to the special case where $R$ is a clique or a cycle. The estimation of these subgraph densities was the original motivation for this paper, as they form natural classes of subgraphs to use for moment-based estimators. We apply Theorem \ref{thm:mainvariance} to prove the concentration of empirical clique and cycle frequencies under more easily verifiable conditions. In particular, we avoid the conditions above that involve all subsets $E \subset E(R)$ explicitly.

In Theorem \ref{the:piconverges} below we assume that the layer type distribution $\pi_n$ converges weakly to a limit distribution with sufficiently many moments.  
In Theorem \ref{the:sizeconverges} we give another sufficient set of conditions, namely that the layer strengths are deterministic and that the moments of $X_n$ are sufficiently well-behaved. At the end of this section, we discuss some implications of these results.

\begin{assumption} \label{ass:piconverges} If $r\geq2$ is even, assume that as $n \to \infty$,
\[
(\pi_n)_{r,r/2} \to (\pi)_{r,r/2} \in (0, \infty).
\]
If $r$ is odd, assume that 
\begin{align*}
(\pi_n)_{r-1,\frac{r-1}{2}} \to (\pi)_{r-1,\frac{r-1}{2}} &\in (0, \infty), \quad \text{and} \\ (\pi_n)_{r,\frac{r+1}{2}} \to (\pi)_{r,\frac{r+1}{2}} &\in (0, \infty).
\end{align*}
\end{assumption}

\begin{remark*}
The conditions in Assumption \ref{ass:piconverges}   correspond to 
\[
(\pi_n)_{\norm{E}, \abs{E}} \to (\pi)_{\norm{E}, \abs{E}} \in (0, \infty),
\] where $E$ is a set of $r/2$ or $(r-1)/2$ disjoint edges, or a union of a disjoint set of edges and a 2-path. These conditions together with {$m_n \asymp n$} control the numbers of $r$-cliques and $r$-cycles formed by multiple layers.
\end{remark*}

\begin{theorem}
\label{the:piconverges}
Assume that $m_n \asymp n$, $\pi_n$ converges weakly to a distribution $\pi$ on $\mathbb{Z}_+ \times [0,1]$, and that Assumption \ref{ass:piconverges} holds for some $r \ge 2$.
\begin{enumerate}[(i)]
\item If $(\pi)_{r,\binom{r}{2}} >0$, then $(\pi_n)_{r,\binom{r}{2}} \to (\pi)_{r,\binom{r}{2}}$,
and the $r$-clique frequency in the model satisfies
\[
\E(N_{K_r}(G_n)) = (1+O(n^{-1})) (r!)^{-1} m_n (\pi_n)_{r,\binom{r}{2}} ,
\]
and
\[
N_{K_r}(G_n) = (1+o_\pr (1)) \E(N_{K_r}(G_n)).
\]
\item If $(\pi)_{r,r} >0$, then $(\pi_n)_{r,r} \to (\pi)_{r,r}$,
and the $r$-cycle frequency in the model satisfies
\[
\E(N_{C_r}(G_n)) = (1+O(n^{-1})) (2r)^{-1} m_n (\pi_n)_{r,r} ,
\]
and
\[
N_{C_r}(G_n) = (1+o_\pr (1)) \E(N_{C_r}(G_n)).
\]
\end{enumerate}
\end{theorem}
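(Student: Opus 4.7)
The proof proceeds in three stages, carried out in parallel for the clique case ($R = K_r$, $s = \binom{r}{2}$) and the cycle case ($R = C_r$, $s = r$). First, I would upgrade Assumption \ref{ass:piconverges} into the convergence $(\pi_n)_{r,s} \to (\pi)_{r,s}$: since $Y \in [0,1]$ and $\binom{r}{2}, r \geq \lceil r/2 \rceil$ for $r \geq 2$, the pointwise domination $(X)_r Y^s \leq (X)_r Y^{\lceil r/2 \rceil}$ holds. For even $r$, the dominating sequence has a convergent expectation by Assumption \ref{ass:piconverges} and is hence uniformly integrable, and the property passes to $(X_n)_r Y_n^s$; for odd $r$, dominate instead by $(X)_r Y^{(r+1)/2}$ and invoke the second clause of the Assumption. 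Combining uniform integrability with the weak convergence $\pi_n \to \pi$ (via truncated bounded continuous test functions) yields moment convergence.

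Next I would compute the expectation. Write $\E N_{K_r}(G_n) = \binom{n}{r} p$ with $p$ the probability that a fixed $r$-set $T$ is a clique in $G_n$, and let $B_k$ be the event that layer $k$ both contains $T$ and carries all $\binom{r}{2}$ edges on $T$. Conditioning on $(X_{n,k}, Y_{n,k})$ gives $\pr(B_k) = (\pi_n)_{r,\binom{r}{2}}/(n)_r$, and independence of layers yields $\pr(\bigcup_k B_k) = m_n \pr(B_1)(1 + O(n^{-1}))$ after Taylor expansion, using that $m_n \pr(B_1) = \Theta(n^{1-r}) = O(n^{-1})$ for $r \geq 2$. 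The residual probability $p - \pr(\bigcup_k B_k)$, in which $T$ is a clique but no single layer covers it, is bounded through the combinatorial observation that if two distinct layers $k_1, k_2$ alone cover $\binom{T}{2}$, then one of them must contain the full vertex set $T$; otherwise an edge between the layers' exclusive vertex subsets would remain uncovered. Summing over the $\binom{m_n}{2} = O(n^2)$ pairs and the finitely many ways of partitioning the edges yields an $O(n^{-r})$ residual, which is $O(n^{-1})$ relative to the main term. Multiplying by $\binom{n}{r}$ produces the stated formula. The cycle case is analogous with $|\Sub(C_r, K_n)| = (n)_r/(2r)$ in place of $\binom{n}{r}$.

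For the concentration I would invoke Theorem \ref{thm:mainvariance} with $q_n = 1$. The conditions $m_n = \Theta(n)$, $q_n = \omega(n^{-1/s})$, and $(\pi_n)_{0,b} \leq 1$ are immediate. The substantive verification is the moment bound $(\pi_n)_{\norm{E}, |E|} = O(1)$ uniformly over $E \subset E(R)$ together with the tail uniform integrability $\sup_n (\pi_n^M)_{\norm{E}, |E|} \to 0$ as $M \to \infty$. Put $a = \norm{E}$, $b = |E|$; since $E$ has no isolated vertices, $b \geq a/2$, and $Y \leq 1$ gives $(X)_a Y^b \leq X^a Y^{a/2} = Z^a$ where $Z = X Y^{1/2}$. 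A Lyapunov inequality then provides $\E Z_n^a \leq (\E Z_n^{r^*})^{a/r^*}$ for any $r^* \geq a$. For even $r$, take $r^* = r$ and use the pointwise bound $X^r \leq r^{r-1}(X)_r + r^r$ to obtain $\E Z_n^r \leq r^{r-1}(\pi_n)_{r,r/2} + r^r$, bounded by Assumption \ref{ass:piconverges}. For odd $r$, take $r^* = r-1$ when $a \leq r-1$ and argue identically using $(\pi_n)_{r-1,(r-1)/2}$; the boundary case $a = r$ is handled directly via $b \geq (r+1)/2$ (integer constraint on $b \geq r/2$) and the monotonicity $(\pi_n)_{r,b} \leq (\pi_n)_{r,(r+1)/2}$. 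The tail uniform integrability follows by applying the same dominations to the $\I(X > M)$ truncations, using that the Assumption moments are themselves uniformly integrable along $n$.

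The main obstacle is this comparison step: Assumption \ref{ass:piconverges} controls only a few matching-like cross moments, whereas Theorem \ref{thm:mainvariance} demands a uniform bound on the cross moment of every subgraph of $R$. The auxiliary variable $Z = X Y^{1/2}$ together with the two-part form of the Assumption in the odd case are calibrated precisely so that $\E Z^{r^*}$ is tractable in every relevant range of $a$.
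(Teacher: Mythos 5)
Your overall architecture matches the paper's: moment convergence from Assumption~\ref{ass:piconverges} by domination and weak convergence, the expectation from the single-layer term plus a combinatorial bound on multi-layer covers, and concentration via Theorem~\ref{thm:mainvariance} with $q_n=\Theta(1)$ after transferring the moment control from the few cross-moments in the assumption to all $E\subset E(R)$ (your $Z=XY^{1/2}$/Lyapunov route is a workable alternative to the paper's Lemma~\ref{lemma:r2moments}, and your uniform-integrability transfer is essentially Lemma~\ref{lemma:suppim}). The gaps are in the expectation step. First, your residual bound only treats covers of $E(R)$ by exactly two layers: you sum over $\binom{m_n}{2}$ pairs. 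The event that $T$ carries all edges but no single layer covers them also includes covers by $t=3,\dots,\binom{r}{2}$ layers, and there are $\binom{m_n}{t}=O(n^t)$ such layer choices, so an $O(n^{-r})$ total requires $\sum_{i}\norm{E_i}\ge r+t$ for every genuine $t$-part edge partition, not just $t=2$. The paper obtains this by recursively merging overlapping parts until two remain and only then invoking the two-part statement (Lemmas~\ref{lemma:uclique} and \ref{lemma:ucycle}); some such induction is missing from your argument.

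Second, and more seriously, the combinatorial observation you rely on --- that if two layers together cover all edges on $T$ then one of them must contain all of $T$ --- is specific to cliques (it is Lemma~\ref{lemma_2_communities}) and is false for cycles. For $C_4$, assign the edges $\{12,23\}$ to one layer and $\{34,41\}$ to another: neither layer's edge set spans all four vertices. The needed conclusion $\norm{E_1}+\norm{E_2}\ge r+2$ still holds for cycles, but for a different reason: each maximal arc of $E_1$ along the cycle contributes two endpoints shared with $V(E_2)$, so $\abs{V(E_1)\cap V(E_2)}\ge 2$. The paper handles this with a separate path-endpoint argument in Lemma~\ref{lemma:ucycle}; as written, your claim that ``the cycle case is analogous'' does not go through and needs its own proof.
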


\begin{remark*}
A simple example of a sequence $(\pi_n)$ that satisfies the assumptions of Theorem \ref{the:piconverges} (for arbitrary $r$) is given by
\[
X_n \sim \Bin(n,5/n), \quad Y_n = (1+X_n)^{-1}, \quad (X_n, Y_n) \sim \pi_n,
\]
which results in many small and dense layers, as well as larger and sparser ones.
\end{remark*}

Next, we focus on a case where the layer strengths are deterministic. Here the conditions of Theorem \ref{thm:mainvariance} can be quite easily verified without considering each subset $E$ in \eqref{eq:UI} separately. We note that the set of assumptions we use in Theorem \ref{the:sizeconverges} is  completely different from Theorem \ref{the:piconverges}: weak convergence of $\pi_n$ is no longer assumed, and the moments (of $X_n$) are assumed to be bounded, but not to necessarily converge. Thus, the model may in fact depend heavily on $n$ through both $X_n$ and $Y_n$ without losing the concentration properties of the clique and cycle frequencies.

\begin{assumption} \label{ass:product} Let $(X_n, Y_n) \sim \pi_n$. For given integers $r$ and $s$, assume that the collection $\{ (X_n)_r: \, n = 1,2,\ldots \}$ is uniformly integrable with $\E[(X_n)_r] = \Theta(1)$, and $Y_n = p_n \in [0,1]$ is such that $p_n^s \gg n^{-1}$. 
\end{assumption}

\begin{theorem}
\label{the:sizeconverges}
Assume that $m_n \asymp n$.
\begin{enumerate}[(i)]
\item If Assumption \ref{ass:product} holds for some $r \geq 2$ and $s= \binom{r}{2}$, then
\[
\E(N_{K_r}(G_n)) = (1+O(n^{-1})) (r!)^{-1} m_n \E[(X_n)_r] p_n^{\binom{r}{2}} ,
\]
and
\[
N_{K_r}(G_n) = (1+o_\pr (1)) \E(N_{K_r}(G_n)).
\]
\item If Assumption \ref{ass:product} holds for some $r\geq 3$ and $s=r$, then
\[
\E(N_{C_r}(G_n)) = (1+O(n^{-1})) (2r)^{-1} m_n \E[(X_n)_r] p_n^r ,
\]
and
\[
N_{C_r}(G_n) = (1+o_\pr (1)) \E(N_{C_r}(G_n)).
\]
\end{enumerate}
\end{theorem}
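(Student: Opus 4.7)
The plan is to derive both conclusions of Theorem~\ref{the:sizeconverges} from Theorem~\ref{thm:mainvariance} (for the $o_\pr$-concentration), combined with a short inclusion--exclusion over the layers (for the leading asymptotics of $\E N_R(G_n)$). Since $Y_n\equiv p_n$ is deterministic, the identity $(\pi_n)_{a,b}=p_n^b\,\E[(X_n)_a]$ lets me translate every hypothesis of Theorem~\ref{thm:mainvariance} into a statement about the factorial moments of $X_n$ and powers of $p_n$, which is where Assumption~\ref{ass:product} takes over.

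First I would verify Theorem~\ref{thm:mainvariance} for $R=K_r$ with $q_n=p_n$ and $s=\binom{r}{2}$, and analogously for $R=C_r$ with $s=r$. The relation $(\pi_n)_{r,s}=\Theta(p_n^s)$ is immediate from $\E[(X_n)_r]=\Theta(1)$. For $E\subset E(R)$ the ratio $(\pi_n)_{\norm{E},\abs{E}}/q_n^{\abs{E}}$ equals $\E[(X_n)_{\norm{E}}]$: the bound $\E[(X_n)_r]=O(1)$ implies $\E[X_n^r]=O(1)$ via $(X_n)_r\ge(X_n/2)^r$ on $\{X_n\ge 2r\}$, and Jensen then yields $\E[(X_n)_k]=O(1)$ for every $k\le r$. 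The bound on $(\pi_n)_{0,b}$ is trivial since the ratio equals $1$. The tail condition splits into the case $\norm{E}=r$, handled by the uniform integrability of $\{(X_n)_r\}$ (because $X_n>M$ forces $(X_n)_r>(M-r+1)^r\to\infty$), and the case $\norm{E}<r$, handled by $X_n^{\norm{E}}\le X_n^r M^{\norm{E}-r}$ on $\{X_n>M\}$. Finally, $q_n=\omega(n^{-1/s})$ is just $np_n^s\gg 1$. Theorem~\ref{thm:mainvariance} then yields $N_R(G_n)=(1+o_\pr(1))\E N_R(G_n)$.

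For the expected-value formula I would fix an isomorphic copy $R'$ of $R$ on a vertex set $V$ of size $r$ and let $A_k$ be the event that layer $k$ covers $V$ and independently generates all edges of $R'$; then $\pr(A_k)=(X_{n,k})_r\,p_n^{\abs{E(R)}}/(n)_r$, and by layer independence with Bonferroni,
\[
\pr\Big(\bigcup_{k=1}^{m_n} A_k\Big)\weq \frac{m_n\,\E[(X_n)_r]\,p_n^{\abs{E(R)}}}{(n)_r}\bigl(1+O(n^{-1})\bigr).
\]
The residual event, where the edges of $R'$ come from $t\ge 2$ distinct layers, is dominated by a sum over partitions $(S_1,\dots,S_t)$ of $E(R')$ of terms of size $m_n^t\,p_n^{\abs{E(R')}}\,n^{-\sum_i\norm{S_i}}\prod_i\E[(X_n)_{\norm{S_i}}]$. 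For $K_r$ no non-trivial partition has $\sum_i\norm{S_i}=r$ (otherwise every vertex would lie in a single part, leaving cross-part edges unassigned), forcing $\sum_i\norm{S_i}\ge r+1$; for $C_r$ a cycle-traversal argument gives $\sum_i\norm{S_i}\ge r+t\ge r+2$, since each change of part around the cycle contributes a shared vertex. Combined with $m_n=\Theta(n)$ and the factorial-moment bounds, each residual term is $O(n^{-1})$ smaller than the leading one. Multiplying by $\abs{\Sub(K_r,K_n)}=(n)_r/r!$ (respectively $(n)_r/(2r)$) produces the stated formulas. The main obstacle is this last partition estimate, especially in the cycle case where $C_r$ has only $r$ edges and so very little slack; the cycle-traversal observation is what supplies the required uniform $O(n^{-1})$ saving, while the remaining work is routine bookkeeping.
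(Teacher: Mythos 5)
Your overall route coincides with the paper's: verify the hypotheses of Theorem~\ref{thm:mainvariance} using the product identity $(\pi_n)_{a,b}=\E[(X_n)_a]\,p_n^b$ (your verification of the moment, ratio, and tail conditions is correct, as is the choice $q_n=p_n$ up to a constant), and obtain the expectation by showing that the single-layer term dominates $U(R)$, with Bonferroni controlling the difference $f(R)-U(R)$. The cycle half of your partition estimate is also sound: the boundary-vertex count around the cycle does give $\sum_i\norm{S_i}\ge r+t$, which is exactly what the paper's Lemma~\ref{lemma:ucycle} establishes by a longest-path argument.

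The genuine gap is in the clique case. You only rule out $\sum_i\norm{S_i}=r$, concluding $\sum_i\norm{S_i}\ge r+1$ for a nontrivial partition of $E(K_r)$, and that is not enough. The number of colourings $\phi$ with $\lvert\ran(\phi)\rvert=t$ is of order $m_n^t=\Theta(n^t)$, so the total contribution of $t$-part partitions is of order $n^{t}\,p_n^{s}\,n^{-\sum_i\norm{S_i}}$; with the bound $\sum_i\norm{S_i}\ge r+1$ the case $t=2$ already gives $\Theta(n^{1-r}p_n^{s})$, the \emph{same} order as the leading single-layer term $m_n(\pi_n)_{r,s}/(n)_r$, so the claimed $(1+O(n^{-1}))$ expansion does not follow. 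What is actually needed (and what makes the argument work for every $t\ge 2$) is $\sum_i\norm{S_i}\ge r+t$. The paper obtains this in Lemma~\ref{lemma:uclique} by combining a recursive merging of parts with Lemma~\ref{lemma_2_communities}, which says that in any two-part edge partition of $K_r$ one part must be incident to all $r$ vertices (so a two-part partition already satisfies $\norm{S_1}+\norm{S_2}\ge r+2$). Your proof needs this sharper combinatorial input; the rest of the argument is routine once it is in place.
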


We now discuss some implications of these results. Consider a sequence of layer size distributions $X_1, X_2, \ldots$ satisfying Assumption \ref{ass:product} for some $r$. If $Y_n = p_n \ll n^{-1/\binom{r}{2}}$, then by Theorem \ref{thm:mainexpectedvalue} and Markov's inequality the number of $r$-cliques is zero with high probability, and the same is true for $r$-cycles if $p_n \ll n^{-1/r}$. This suggests that if $p_n \to 0$ with a specific rate, then $r$-cycles appear in the model, but $r$-cliques do not. Indeed, it follows from Theorem \ref{the:sizeconverges} that this is the case for 4-cycles and 4-cliques when $n^{-1/4} \ll p_n \ll n^{-1/6}$. This phenomenon is not observed in the passive random intersection graph (i.e., $p_n=1$): namely, if $p_n$ is a constant, it follows from Theorem \ref{the:sizeconverges} that both $N_{K_4}$ and $N_{C_4}$ are of order $n$.

On the other hand, consider an Erd\H{o}s--R{\'e}nyi graph $G(n,p_{ER})$ with the same edge probability as our model, i.e., 
\[
p_{ER} = 1-\left(1-\frac{\E[(X)_2]p_n}{(n)_2}\right)^m.
\]
When $m \asymp n$ and $\E[(X)_2] \asymp 1$, it follows that, $p_{ER} \asymp p_n/n$, and so $\E N_{C_4} \asymp p_n^4$, and $\E N_{K_4} \asymp n^{-2}p_n^6$ in the ER graph. In our model, under the conditions of Theorem \ref{thm:mainexpectedvalue}, $\E N_{C_4} \asymp np_n^4$ and $\E N_{K_4} \asymp np_n^6$. This example illustrates the fact that our model typically generates much more cliques and cycles than $G(n,p_{ER})$.

Finally, we consider a parameter estimation problem. Let the  strength of the layers be a constant, $Y_n = p \in (0,1)$. Then, under the assumptions of Theorem \ref{the:sizeconverges}, $\E N_{K_4}/\E N_{C_4} = p^2/3(1+O(n^{-1}))$, which suggests the  estimator 
\[
p_{est} = \sqrt{ 3 N_{K_4}/ N_{C_4}} \, \I(N_{C_4}>0).
\]
Indeed, it follows by our results on $N_{K_4}$ and $N_{C_4}$, and an application of the continuous mapping theorem, that $p_{est} = p + o_{\mathbb{P}}(1)$, i.e., $p_{est}$ is consistent. We observe that, conveniently, this estimator does not require any information about $m_n$ or $X_n$, as long as they satisfy the assumptions of Theorem \ref{the:sizeconverges}.

\section{Proofs}

We outline the structure of our proofs as follows. 
The proof of Theorem \ref{thm:mainexpectedvalue} consists of bounding $\E N_R(G_n)$ from above and below. The lower bound follows by a simple argument using the independence of the layers, and the upper bound follows from the union bound and the inequalities in Lemmas \ref{the:GoodCover} and \ref{the:UpperBoundNew}. Lemma \ref{the:UpperBoundNew}, which gives Bonferroni bounds for subgraph probabilities, is a central tool in our analysis and will be repeatedly used in the later proofs.

Theorem \ref{thm:mainvariance} is first proved for the case of bounded layer sizes, and then in the general setting. The first case follows by Chebyshev's inequality, where the variance terms are bounded using the inequalities in Lemmas \ref{the:UpperBoundNew} and \ref{thm:disjointcliques}. 
The requirement of bounded layer sizes is then removed, employing the convergence result in Lemma \ref{the:Truncated2Nontruncated}. 

The proofs of Theorems \ref{the:piconverges} and \ref{the:sizeconverges} are based on Theorem \ref{thm:mainvariance}. In Lemmas \ref{lemma_2_communities}--\ref{lemma:ucycle} we obtain upper bounds for the subgraph probabilities, which together with Lemma \ref{the:UpperBoundNew} yield the expressions for the expected subgraph frequencies. In Lemmas \ref{lemma:r2moments} and \ref{lemma:suppim} we derive technical results to verify the last condition of Theorem \ref{thm:mainvariance}. The proofs  of Theorems \ref{the:piconverges} and \ref{the:sizeconverges} are then finished using standard convergence theorems in probability theory.

\subsection{Edge covers}

A \emph{cover} of $E(R)$ is a collection $\cE$ of nonempty subsets $E \subset E(R)$ such that $\cup_{E \in \cE} E = E(R)$. Recall that $\abs{E}$ denotes the number of edges in $E$, and $\norm{E}$ the number of nodes incident to an edge in $E$.

\begin{lemma}
\label{the:GoodCover}
Let $R$ be a connected graph, and let $\cE$ be a cover of $E(R)$. Then
\begin{equation}
\label{eq:firstcover}
 \sum_{E \in \cE} (\norm{E} - 1) \wge \abs{V(R)} - 1. 
\end{equation}
If $\cE$ contains an overlapping pair of sets, then 
\begin{equation}
 \label{eq:GoodCover}
 \sum_{E \in \cE} (\norm{E} - 1) \wge \abs{V(R)},
\end{equation}
and
\begin{equation}
 \label{eq:GoodCoverSimple}
 \sum_{E \in \cE} \abs{E} \wge \abs{E(R)} +1.
\end{equation}
\end{lemma}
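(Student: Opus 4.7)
The plan is to attach to each $E \in \cE$ a spanning forest $T_E$ of the subgraph $(V_E, E)$, where $V_E$ is the set of $\norm{E}$ nodes incident to edges of $E$. Each such forest satisfies $|T_E| \le \norm{E} - 1$, with slack equal to one less than the number of connected components of $(V_E, E)$. Setting $F := \bigcup_{E \in \cE} T_E \subseteq E(R)$, the whole proof reduces to comparing $|F|$ with $\sum_{E \in \cE} |T_E|$ on one side and $|V(R)|-1$ on the other.

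To obtain \eqref{eq:firstcover}, I would first show that $(V(R), F)$ is connected. For any $uv \in E(R)$, the covering property produces some $E \in \cE$ with $uv \in E$, so $u$ and $v$ lie in the same component of $(V_E, E)$ and are therefore joined by a path in $T_E \subseteq F$. Since $R$ itself is connected, $(V(R), F)$ must be connected as well, whence $|F| \ge |V(R)| - 1$. Combined with the union bound $|F| \le \sum_{E \in \cE} |T_E| \le \sum_{E \in \cE} (\norm{E} - 1)$, this yields \eqref{eq:firstcover}.

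For \eqref{eq:GoodCover} I would exploit the overlap. Pick $E_1, E_2 \in \cE$ together with a shared edge $e \in E_1 \cap E_2$; since $\{e\}$ is a cycle-free subgraph inside each of $(V_{E_1}, E_1)$ and $(V_{E_2}, E_2)$, it can be extended to spanning forests $T_{E_1}, T_{E_2}$ of those graphs. Then $e$ is counted twice in $\sum_E |T_E|$ but appears only once in $F$, so the union bound sharpens to $|F| \le \sum_{E \in \cE} |T_E| - 1 \le \sum_{E \in \cE} (\norm{E} - 1) - 1$, and combining this with $|F| \ge |V(R)| - 1$ gives \eqref{eq:GoodCover}. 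Inequality \eqref{eq:GoodCoverSimple} is immediate from the overlap as well, since $\sum_{E \in \cE} |E| > \bigl|\bigcup_{E \in \cE} E\bigr| = |E(R)|$ and the left-hand side is an integer.

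I do not anticipate a serious obstacle. The only step requiring any care is checking that the shared edge $e$ can be simultaneously extended to a spanning forest of both $(V_{E_1}, E_1)$ and $(V_{E_2}, E_2)$, which is clear because the two extensions are performed independently in disjoint ambient problems. Everything else is routine bookkeeping around $|T_E| \le \norm{E} - 1$ and the connectivity argument for $F$.
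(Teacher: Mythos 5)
Your proof is correct, but it takes a genuinely different route from the paper. The paper orders the sets of $\cE$ as $E_1,\dots,E_t$ so that each $V(E_s)$ meets $W_{s-1}=V(E_1)\cup\cdots\cup V(E_{s-1})$ (possible by connectivity of $R$), telescopes $\abs{\bigcup_s V(E_s)} = \sum_s \abs{V(E_s)} - \sum_{s\ge 2}\abs{V(E_s)\cap W_{s-1}}$, and bounds each intersection term below by $1$ — or by $2$ for the first term when the ordering is arranged to start from an overlapping pair; \eqref{eq:GoodCoverSimple} is obtained by the same telescoping applied to the edge sets. Your spanning-forest argument replaces the ordering and telescoping with the single observation that $F=\bigcup_E T_E$ is a connected spanning subgraph of $R$, hence $\abs{F}\ge\abs{V(R)}-1$, and the overlapping case follows by forcing the shared edge $e$ into both $T_{E_1}$ and $T_{E_2}$ so that it is double-counted in $\sum_E\abs{T_E}$. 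What your version buys is that it avoids the relabeling step entirely (in the overlapping case the paper must simultaneously arrange $\abs{V(E_1)\cap V(E_2)}\ge 2$ and the chain-overlap property for $s\ge 3$, which it asserts without detail), and it makes the source of the extra $+1$ very transparent; the one point you rightly flag — that an acyclic set such as $\{e\}$ extends to a spanning forest — is standard. What the paper's telescoping identity buys is that it is reused almost verbatim in the proof of Lemma~\ref{the:DisjointGraphCover} and in the clique/cycle counting lemmas, so it serves as a template for the rest of the paper, whereas your argument would need to be adapted separately there. Your derivation of \eqref{eq:GoodCoverSimple} from $\sum_{E}\abs{E} > \abs{E(R)}$ plus integrality is a slight shortcut over the paper's edge-set telescoping and is equally valid.
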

\begin{proof}
If $|\cE| = 1$, then $\cE=\{E(R)\}$ and trivially $\sum_{E\in \cE} (\norm{E}-1) = V(R)-1$. Let $t:= |\cE| \geq 2$. Denote by $V(E) = \cup_{e \in E} e$ the set of nodes incident to an edge in $E$. Because $R$ is connected, we may label the sets of $\cE$ as $E_1, \dots, E_t$ so that $V(E_s)$ overlaps $W_{s-1} = V(E_1) \cup \cdots \cup V(E_{s-1})$ for all $2 \le s \le t$. Denoting $V_s = V(E_s)$, we now find that
\[
 \big\lvert \bigcup_{s=1}^t V_s \big\rvert
 \weq \abs{V_1} + \sum_{s=2}^t \abs{V_s \setminus W_{s-1}}
 \weq \sum_{s=1}^t \abs{V_s} - \sum_{s=2}^t \abs{V_s \cap W_{s-1}}
\]
Because $\bigcup_{s=1}^t V_s = V(R)$ and $\sum_{s=1}^t \abs{V_s} = \sum_{E \in \cE} \norm{E}$, it follows that
\[
 \sum_{E \in \cE} \norm{E}
 \weq \abs{V(R)} + \sum_{s=2}^t \abs{V_s \cap W_{s-1}} \wge |V(R)| + t-1,
\]
which gives the first inequality of the claim.

Assume now that $\cE$ contains an overlapping pair.
We label the sets in $\cE$ as $E_1, \ldots, E_t$ so that $\abs{V(E_1 \cap E_2)} \ge 2$ and  $V(E_s)$ overlaps $W_{s-1} = V(E_1) \cup \cdots \cup V(E_{s-1})$ for all $3 \le s \le t$. 
Again we obtain
\[
 \sum_{E \in \cE} \norm{E}
 \weq \abs{V(R)} + \sum_{s=2}^t \abs{V_s \cap W_{s-1}}.
\]
Inequality \eqref{eq:GoodCover} follows by noting that
\[
 \sum_{s=2}^t \abs{V_s \cap W_{s-1}}
 \weq \abs{V_1 \cap V_2} + \sum_{s=3}^t \abs{V_s \cap W_{s-1}}
 \wge 2 + (t-2)
 \weq \abs{\cE}.
\]
Denote $F_s = E_1 \cup \cdots \cup E_s$. Inequality \eqref{eq:GoodCoverSimple} follows by
\[
 \abs{E(R)}
 \weq \big\lvert \bigcup_{s=1}^t E_s \big\rvert
 \weq \sum_{s=1}^t \abs{E_s} - \sum_{s=2}^t \abs{E_s \cap F_{s-1}}
 \wle \sum_{s=1}^t \abs{E_s} - \abs{E_1 \cap E_2},
\]
where $\abs{E_1 \cap E_2} \ge 1$.
\end{proof}

The following example shows that the assumption about the existence of an overlapping pair cannot be relaxed.
\begin{example*}
Let $R$ the a star graph with $r$ edges. Observe that $\norm{E} = \abs{E} + 1$ for any $E \subset E(R)$.
Hence $\sum_{E \in \cE} (\norm{E} - 1) = \sum_{E \in \cE} \abs{E}$. If the sets of $\cE$ are disjoint, then $\sum_{E \in \cE} (\norm{E} - 1) = \abs{E(R)} = \abs{V(R)} - 1$.
\end{example*}

\begin{lemma}
\label{the:DisjointGraphCover}
Let $R_1$ and $R_2$ be connected graphs such that $V(R_1) \cap V(R_2) = \emptyset$. Then for any partition $\cE$ of $E(R_1) \cup E(R_2)$, 
\begin{align*}
 \sum_{E \in \cE} (\norm{E} - 1)
 \wge \abs{V(R_1)} + \abs{V(R_2)} - 2 + \abs{\cE_0},
\end{align*}
where $\cE_0$ is the collection of sets in $\cE$ which overlap both $E(R_1)$ and $E(R_2)$.
\end{lemma}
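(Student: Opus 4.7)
The plan is to reduce the claim to Lemma \ref{the:GoodCover} by splitting $\cE$ into pieces associated with $R_1$ and $R_2$. First I would partition $\cE$ into three groups: $\cE_i = \{E \in \cE : E \subset E(R_i)\}$ for $i=1,2$, together with $\cE_0$ as defined in the statement. For each $E \in \cE_0$, set $E^{(1)} = E \cap E(R_1)$ and $E^{(2)} = E \cap E(R_2)$; both are nonempty by the defining property of $\cE_0$. Then form the auxiliary collections $\cF_1 = \cE_1 \cup \{E^{(1)} : E \in \cE_0\}$ and $\cF_2 = \cE_2 \cup \{E^{(2)} : E \in \cE_0\}$. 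Each $\cF_i$ is a cover of $E(R_i)$, since every edge of $R_i$ lies in some $E \in \cE$, which belongs either to $\cE_i$ or to $\cE_0$, and in the latter case the edge is captured by $E^{(i)}$.

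Since $R_1$ and $R_2$ are connected, applying inequality \eqref{eq:firstcover} of Lemma \ref{the:GoodCover} separately to $\cF_1$ and $\cF_2$ yields
\[
 \sum_{F \in \cF_1} (\norm{F} - 1) \wge \abs{V(R_1)} - 1
 \qquad\text{and}\qquad
 \sum_{F \in \cF_2} (\norm{F} - 1) \wge \abs{V(R_2)} - 1.
\]
Adding these and regrouping by the origin of each summand gives
\[
 \sum_{E \in \cE_1 \cup \cE_2}(\norm{E}-1)
 \,+\, \sum_{E \in \cE_0}\bigl((\norm{E^{(1)}} - 1) + (\norm{E^{(2)}} - 1)\bigr)
 \wge \abs{V(R_1)} + \abs{V(R_2)} - 2.
\]

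The key identity driving the extra $\abs{\cE_0}$ term is $\norm{E} = \norm{E^{(1)}} + \norm{E^{(2)}}$ for every $E \in \cE_0$, which uses precisely the hypothesis $V(R_1) \cap V(R_2) = \emptyset$: the vertices incident to $E$ partition cleanly into those in $V(R_1)$ and those in $V(R_2)$. Substituting, each $E \in \cE_0$ contributes $(\norm{E} - 1) - 1$ to the left-hand side, so the left side becomes $\sum_{E \in \cE}(\norm{E}-1) - \abs{\cE_0}$, and rearranging gives the claim.

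I do not expect any real obstacle beyond bookkeeping; the only subtlety is ensuring that $\cF_1$ and $\cF_2$ are genuine covers consisting of nonempty sets, which is immediate from the definition of $\cE_0$. The disjointness hypothesis $V(R_1) \cap V(R_2) = \emptyset$ does all the analytic work by enforcing additivity of $\norm{\cdot}$ on split pieces. The degenerate cases in which $R_1$ or $R_2$ has no edges force $\cE_0 = \emptyset$ and reduce directly to a single application of Lemma \ref{the:GoodCover}.
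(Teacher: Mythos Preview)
Your proof is correct and follows essentially the same route as the paper: split each mixed set $E\in\cE_0$ into $E\cap E(R_1)$ and $E\cap E(R_2)$, apply inequality \eqref{eq:firstcover} of Lemma~\ref{the:GoodCover} separately to the resulting covers of $E(R_1)$ and $E(R_2)$, and use the disjointness of $V(R_1)$ and $V(R_2)$ to get $\norm{E}=\norm{E^{(1)}}+\norm{E^{(2)}}$, which produces the extra $\abs{\cE_0}$. The paper phrases the bookkeeping slightly differently (it first shows $\sum_{E\in\cE'}\norm{E}=\sum_{E\in\cE}\norm{E}$ for the refined partition $\cE'$ and then applies Lemma~\ref{the:GoodCover}), but the argument is the same.
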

\begin{proof}
We create a new partition of $E(R_1) \cup E(R_2)$ by replacing each $E \in \cE_0$ by two sets $E \cap E(R_1)$ and $E \cap E(R_2)$. The resulting partition can be written as
\[
 \cE' \weq (\cE \setminus \cE_0) \cup \cE_1 \cup \cE_2,
\]
where $\cE_i = \{ E \cap E(R_i): E \in \cE_0\}$ for $i=1,2$. Then $\abs{\cE'} = \abs{\cE} + \abs{\cE_0}$, and
\begin{align*}
 \sum_{E \in \cE'} \norm{E}
 &\weq \sum_{E \in \cE} \norm{E} - \sum_{E \in \cE_0} \norm{E}
 + \sum_{E \in \cE_1} \norm{E} + \sum_{E \in \cE_2} \norm{E} \\
 &\weq \sum_{E \in \cE} \norm{E}
 + \sum_{E \in \cE_0} \Big( \norm{E \cap E(R_1)} + \norm{E \cap E(R_2)} - \norm{E} \Big).
\end{align*}
Observe next that $V(E) = V(E \cap E(R_1)) \cup V(E \cap E(R_2))$. Because $V(R_1)$ and $V(R_2)$ are disjoint, so are $V(E \cap E(R_1))$ and $V(E \cap E(R_2))$, and it follows that $\norm{E} = \norm{E \cap E(R_1)} + \norm{E \cap E(R_2)}$. Hence we see that
\begin{align*}
 \sum_{E \in \cE'} \norm{E}
 &\weq \sum_{E \in \cE} \norm{E},
\end{align*}
and
\begin{align*}
 \sum_{E \in \cE} (\norm{E} - 1)
 \weq \sum_{E \in \cE'} (\norm{E} - 1) + \abs{\cE_0}.
\end{align*}
Next, we note that $\cE'$ can be written as a disjoint union $\cE' = \cE'_1 \cup \cE'_2$ in which $\cE'_i$ is a partition of $E(R_i)$. Because $R_i$ is connected, it follows by Lemma~\ref{the:GoodCover} that $\sum_{E \in \cE'_i} (\norm{E} - 1) \ge \abs{V(R_i)} - 1$. Using the above equality we conclude that
\begin{align*}
 \sum_{E \in \cE} (\norm{E} - 1)
 &\weq \sum_{E \in \cE'_1} (\norm{E} - 1) + \sum_{E \in \cE'_2} (\norm{E} - 1) + \abs{\cE_0} \\
 &\wge \abs{V(R_1)} + \abs{V(R_2)} - 2 + \abs{\cE_0}.
\end{align*}
\end{proof}

\subsection{Upper bounds related to subgraph probabilities}

Let $R$ be a graph with node set contained in $\{1,\dots,n\}$. Then $G_n \supset R$ if and only if $\cup_k E(G_{n,k}) \supset E(R)$, which means that for every $e \in E(R)$ there exists a layer $k$ such that $e \in E(G_{n,k})$. In other words, there is a mapping $\phi \colon E(R) \to \{1,\dots, m\}$ that assigns each edge $e \in E(R)$ some layer $\phi(e)$ which generates the edge (such $\phi$ can be seen as an edge coloring of $R$). Therefore,
\[
 \{ G_n \supset R \}
 \weq \bigcup_{\phi} A_\phi,
\]
where the union is over all $\phi \colon E(R) \to \{1,\dots, m\}$ and, denoting by $\phi^{-1}(k)$ the preimage of $\{k\}$ under $\phi$,
\[
 A_\phi
 \weq \bigcap_{e \in E(R)} \big\{ E(G_{n,\phi(e)}) \ni e \big\}
 \weq \bigcap_{k \in \ran(\phi)} \big\{E(G_{n,k}) \supset \phi^{-1}(k) \big\}
\]
is the event that for every $k$, the edge set $\phi^{-1}(k)$ is covered by layer $G_{n,k}$.
Denote the probability of $R$ being generated by $G_n$ by $f(R) = \pr( G_n \supset R)$.
Bonferroni's inequalities imply that $U(R) - \frac12 L(R) \le f(R) \le U(R)$, where
\[
 U(R)
 \weq \sum_{\phi} \pr(A_\phi)
 \qquad \text{and} \qquad
 L(R)
 \weq \sumd_{\phi, \psi} \pr(A_\phi, A_\psi),
\]
where the first sum is over all $\phi \colon E(R) \to \{1,\dots, m\}$, and $\sum'$ denotes a sum over pairs with $\phi \neq \psi$.
The following result gives explicit bounds for $U(R)$ and $L(R)$.

\begin{lemma}
\label{the:UpperBoundNew}
Let $R$ be a connected graph with $r$ nodes and $s$ edges such that $V(R) \subset \{1,\dots, n\}$. For any $x \in (0,\infty)$, $y \in (0, \infty)$, and $n \ge x \vee r$ it holds that
\begin{align}
\label{eq:UpperBoundNew}
 \sum_\phi \pr(A_\phi) &\wle \left( e^{r!c x m/n} - 1 \right) s^s x^{r-1} y^s n^{1-r},
\end{align}
 and if $y \in (0,1]$,
\begin{align} \label{eq:LowerBoundNew}
 \sumd_{\phi, \psi} \pr(A_\phi, A_\psi)
 &\wle \left( e^{r! c x m/n} - 1 \right) (2s)^{2s} x^r y^{s+1} n^{-r},
\end{align}
where $c = \max_{E \subset E(R)} \frac{ (\pi)_{\norm{E}, \abs{E}}}{x^\norm{E} y^\abs{E}}$.
\end{lemma}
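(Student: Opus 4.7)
The plan is to exploit the independence of the layers to factor $\pr(A_\phi)$ over the set partition $\cE_\phi := \{\phi^{-1}(k): k \in \ran(\phi)\}$ of $E(R)$ induced by $\phi$, regroup the sum over $\phi$ by this partition, and apply Lemma~\ref{the:GoodCover} to bound $\sum_{E \in \cE} \norm{E}$ below, so that the resulting series is a Stirling-number analogue of the Taylor expansion of $e^{r!cxm/n}-1$.

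For any $E' \subset E(R)$, conditioning on $(X_{n,k},Y_{n,k}) \sim \pi_n$ gives
\[
\pr\bigl(E(G_{n,k}) \supset E'\bigr) \weq \E\!\left[\frac{(X_{n,k})_{\norm{E'}}}{(n)_{\norm{E'}}} Y_{n,k}^{\abs{E'}}\right] \weq \frac{(\pi_n)_{\norm{E'},\abs{E'}}}{(n)_{\norm{E'}}}.
\]
Layer independence then factorizes $\pr(A_\phi)$ over $\cE_\phi$. Using $(\pi_n)_{\norm{E},\abs{E}} \le c\, x^{\norm{E}} y^{\abs{E}}$ (the definition of $c$) and $(n)_a \ge n^a/r!$ (valid for $0 \le a \le r \le n$, and the origin of the $r!$ in the exponent), together with $\sum_E \abs{E}=s$ and $(m)_t \le m^t$ counting how many $\phi$ induce a given partition, I get
\[
\sum_\phi \pr(A_\phi) \wle y^s \sum_{\cE} (c\,r!\, m)^{\abs{\cE}} (x/n)^{\sum_{E \in \cE} \norm{E}}.
\]
Applying inequality~\eqref{eq:firstcover} yields $\sum_E \norm{E} \ge r - 1 + \abs{\cE}$, and since $x \le n$, the sum is bounded by $y^s (x/n)^{r-1} \sum_{t \ge 1} S(s,t) (r!cxm/n)^t$, where $S(s,t)$ is the Stirling number of the second kind. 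The estimate $S(s,t) \le s^s/t!$ then gives $s^s(e^{r!cxm/n}-1)$, producing~\eqref{eq:UpperBoundNew}.

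For the pair bound, observe that $A_\phi \cap A_\psi = \bigcap_k \{E(G_{n,k}) \supset B_k\}$ with $B_k := \phi^{-1}(k) \cup \psi^{-1}(k)$, and that the cover $\cE_{\phi,\psi} := \{B_k : B_k \ne \emptyset\}$ contains an overlapping pair, since $\phi \ne \psi$ forces some $e \in E(R)$ to lie in both $B_{\phi(e)}$ and $B_{\psi(e)}$. Applying the stronger inequalities \eqref{eq:GoodCover} and \eqref{eq:GoodCoverSimple} then gives $\sum_E \norm{E} \ge r + \abs{\cE}$ and $\sum_E \abs{E} \ge s+1$. Using $y \le 1$ to pull out a factor $y^{s+1}$, and bounding the number of pairs $(\phi,\psi)$ with $\abs{\cE_{\phi,\psi}} = t$ by $(m)_t S(2s,t) \le m^t (2s)^{2s}/t!$ (treating the pair as a joint surjection from the $2s$ edge-slots onto $t$ labels), the same Taylor-series argument yields~\eqref{eq:LowerBoundNew}.

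The main technical obstacle is the combinatorial bookkeeping: the bounds $S(s,t) \le s^s/t!$, $(n)_a \ge n^a/r!$, and the surjection count are each loose, so combining them must produce exactly the stated prefactors $s^s$ and $(2s)^{2s}$ together with the $r!$ inside the exponents, without picking up extra factors. A secondary issue is that $(n)_a \ge n^a/r!$ is easiest to justify for $n \ge r$, while the lemma hypothesizes only $n \ge x$; this gap can presumably be closed either by case analysis or by a sharper lower bound on $(n)_a$ exploiting $a \le \norm{E(R)} = r$ together with $n \ge x$.
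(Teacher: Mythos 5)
Your proposal is correct and follows essentially the same route as the paper: factor $\pr(A_\phi)$ (resp.\ $\pr(A_\phi,A_\psi)$) over the induced partition (resp.\ cover) of $E(R)$ using layer independence, bound each factor via $c$ and $(n)_a \ge n^a/a!$, invoke Lemma~\ref{the:GoodCover} for the exponent of $x/n$ and (in the pair case) of $y$, and sum over range sizes $t$ to recover the truncated exponential; your Stirling-number count $(m)_t S(s,t) \le m^t s^s/t!$ is just a repackaging of the paper's bound $\binom{m}{t}t^s$. The worry you raise about $n \ge r$ is moot, since $V(R) \subset \{1,\dots,n\}$ already forces it.
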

\begin{proof}
Fix $\phi \colon E(R) \to \{1,\dots, m\}$. Because layers are mutually independent, we find that
\begin{equation*}
 \pr(A_\phi)
 \weq \prod_{k \in \ran(\phi)} \frac{(\pi)_{||\phi^{-1}(k)||, \abs{\phi^{-1}(k)}}}{(n)_{||\phi^{-1}(k)||}}.
\end{equation*}
Denote $K = \ran(\phi)$ and $E_k = \phi^{-1}(k)$. Then the inequalities $\norm{E_k} \le r$ and $(n)_a \ge n^a/a!$ for $n \ge a$ imply that
\begin{equation*}
 \pr(A_\phi)
 \wle \prod_{k \in K} \frac{ \norm{E_k} ! c x^\norm{E_k} y^\abs{E_k}}{n^{\norm{E_k}}}
 \wle (r!c)^{\abs{K}} \, (x/n)^{\sum_{k} \norm{E_k} } y^{\sum_k \abs{E_k}},
\end{equation*}
Now \eqref{eq:firstcover} in Lemma~\ref{the:GoodCover} implies that $\sum_{k \in K} \norm{E_k} \ge r + \abs{K}-1$, and we also see that $\sum_{k \in K} \abs{E_k} = s$ because the sets $E_k$ form a partition of $E(R)$.  Hence, for $n \ge x$,
\[
 \pr(A_\phi)
 \wle x^{r-1} y^s n^{1-r}  (r!c x/n)^{\abs{K}}.
\]
Observe that the number of $\phi \colon E(R) \to \{1,\dots, m\}$ with $\abs{\ran(\phi)} = t$ is bounded by $\binom{m}{t} t^s$. Therefore,
\[
 \sum_\phi \pr(A_\phi)
 \wle x^{r-1} y^s n^{1-r} \sum_{t=1}^s \binom{m}{t} t^s (r!c x/n)^t,
\]
and since
\[
 \sum_{t=1}^s \binom{m}{t} t^s (r!c x/n)^t
 \wle s^s \sum_{t=1}^s \frac{(r!c x m/n)^t}{t!}
 \wle s^s \left( e^{r!c x m/n} - 1 \right),
\]
the upper bound of \eqref{eq:UpperBoundNew} follows.

(ii) Let us verify \eqref{eq:LowerBoundNew}.  Let $\phi, \psi: E(R) \to \{1,\dots, m\}$ be distinct. We find that
\[
 A_\phi \cap A_\psi
 \weq \bigcap_{k \in K} \big\{E(G_{k}) \supset E_k \big\}.
\]
where $K = \ran(\phi) \cup \ran(\psi)$ and $E_k = \phi^{-1}(k) \cup \psi^{-1}(k)$. The independence of the layers implies that
\[
 \pr(A_\phi, A_\psi)
 \weq \prod_{k \in K} \frac{(\pi)_{\norm{E_k}, \abs{E_k}}}{(n)_{\norm{E_k}}}.
\]
Then the inequalities $\norm{E_k} \le r$ and $(n)_a \ge n^a/a!$ for $n \ge a$ imply that
\begin{equation}
 \pr(A_\phi, A_\psi)
 \wle \prod_{k \in K} \frac{ \norm{E_k} ! c x^\norm{E_k} y^\abs{E_k} }{n^{\norm{E_k}}}
 \wle (r!c)^{\abs{K}} \, (x/n)^{\sum_{k} \norm{E_k} } y^{\sum_k \abs{E_k}}.
\end{equation}
Since $\phi \ne \psi$, there exists an edge $e \in E(R)$ such that $\phi(e) \ne \psi(e)$. Denote $k' = \phi(e)$ and $k'' = \psi(e)$. Then $e \in E_{k'} \cap E_{k''}$, and $\cE = \{E_k: k \in K\}$ is a cover of $E(R)$ containing an overlapping pair of sets. Hence by Lemma~\ref{the:GoodCover}, it follows that $\sum_{k \in K} \norm{E_k} \ge r + \abs{K}$. The same lemma also implies that $\sum_{k \in K} \abs{E_k} \ge  s + 1$. Hence,
\[
 \pr(A_\phi, A_\psi)
 \wle (r!c)^{\abs{K}} \, (x/n)^{r + \abs{K} } y^{s+1}.
\]
By noting that the number of distinct pairs of $(\phi, \psi)$ is bounded by $\binom{m}{t}t^{2s}$, it follows that
\[
 \sumd_{\phi, \psi} \pr(A_\phi, A_\psi)
 \wle x^r y^{s+1} n^{-r} \sum_{t=1}^{2s} \binom{m}{t} t^{2s} \left( \frac{r! c x}{n} \right)^t.
\]
Inequality \eqref{eq:LowerBoundNew} now follows by
\[
 \sum_{t=1}^{2s} \binom{m}{t} t^{2s} \left( \frac{r! c x}{n} \right)^t
 \wle (2s)^{2s} \sum_{t=1}^{2s} \frac{(r! c x m/n)^t}{t!}
 \wle (2s)^{2s} \left( e^{r!c x m/n} - 1 \right).
\]
\end{proof}

\subsection{Analysis of expected frequencies}

\begin{proof}[Proof of Theorem~\ref{thm:mainexpectedvalue}]
Denote $q_n = (\frac{m_n}{n} (\pi_n)_{r,s})^{1/s}$.
By applying the union bound $f(R) \le U(R)$, and Lemma \ref{the:UpperBoundNew} with $x=1, y=q_n$, we find that 
\[
 U(R)
 \wle (e^{r!cm_n/n} -1) s^s q_n^s n^{1-r}
\]
where $c = \max_{E \subset E(R)} \frac{(\pi_n)_{\norm{E},|E|}}{q_n^{\abs{E}}}$.
Assumption~\eqref{eq:maxcond} in this case implies that $\left( \frac{m_n}{n} (\pi_n)_{\norm{E},|E|} \right)^{1/\abs{E}} \lesim q_n$ for all $E \subset E(R)$, so that
\begin{align*}
 c
 \weq \frac{n}{m_n} \max_{E \subset E(R)}
 \left\{ \frac{1}{q_n} \Big( \frac{m_n}{n} (\pi_n)_{\norm{E},|E|} \Big)^{1/\abs{E}} \right\}^\abs{E}
 \wlesim \frac{n}{m_n},
\end{align*}
and so $U(R) \lesim q_n^s n^{1-r} = m_n n^{-r} (\pi_n)_{r,s}$.

For the other direction, we note that by independence,
\[
 \pr \Big( \bigcup_{ \! \phi: |\ran(\phi)|=1} \cA_\phi \Big)
 \weq 1 - \nquad \prod_{\phi: |\! \ran(\phi)|=1} \nhquad \Big( 1-\pr(\cA_\phi ) \Big)
 \weq 1- \Big( 1-\frac{(\pi_n)_{r,s}}{(n)_r} \Big)^m.
\]
We note that $1-(1-p)^m \ge mp - \frac12 (mp)^2 = (1-o(1))mp$ for $mp \ll 1$, and
$1-(1-p)^m \asymp 1$ for $mp \gesim 1$. We conclude that
\[
 f(R)
 \wge \pr \Big( \bigcup_{ \! \phi: |\ran(\phi)|=1} \cA_\phi \Big)
 \wgesim m_n \frac{(\pi_n)_{r,s}}{(n)_r} \wedge 1.
\]
Being a probability, it obvious that $f(R) \le 1$.  Therefore, we conclude that
$f(R) \asymp m_n \frac{(\pi_n)_{r,s}}{(n)_r} \wedge 1$. 
The claim now follows by noting that $\E N_R(G_n) =  \frac{(n)_r}{\abs{\Aut(R)}} f(R)$ where $\abs{\Aut(R)}$ is the number of graph automorphisms of $R$.
\end{proof}

\subsection{Analysis of variances}

\begin{lemma}
\label{thm:disjointcliques}
For any connected graphs $R_1$ and $R_2$ having disjoint node sets contained in $\{1,\dots, n\}$, and any 
$x \in (0,\infty)$, $y \in (0,1]$, and $n \ge x \vee r$,
\begin{equation}
 \label{eq:DisjointGraphs}
 \begin{aligned}
 &\pr( G \supset R_1, G \supset R_2) - \pr(G \supset R_1) \pr( G \supset R_2) \\
 &\qquad \wle e^{r!c x m/n} \left( 1 + (2s)^{2s} \right) s^s x^{r -1} y^s  n^{1-r},
 \end{aligned}
\end{equation}
where $c = \max_{E \subset E(R_1) \cup E(R_2)} \frac{ (\pi)_{\norm{E}, \abs{E}}}{x^\norm{E} y^\abs{E}}$, 
 $r = \abs{V(R_1)} + \abs{V(R_2)}$, and $s = \abs{E(R_1)} + \abs{E(R_2)}$.
\end{lemma}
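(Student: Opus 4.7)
The plan is to extend the counting argument of Lemma~\ref{the:UpperBoundNew} to the (disconnected) graph $R_1 \cup R_2$ and to isolate the portion of the sum coming from layers that intersect both $E(R_1)$ and $E(R_2)$, since that portion is precisely what couples the events $\{G \supset R_1\}$ and $\{G \supset R_2\}$.

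I would start from the union-bound representation $\pr(G \supset R_1, G \supset R_2) \wle \sum_{\phi_1, \phi_2} \pr(A_{\phi_1} \cap A_{\phi_2})$, where $\phi_i : E(R_i) \to \{1,\dots,m\}$, and split the double sum into $\Phi_d$ (with $\ran(\phi_1) \cap \ran(\phi_2) = \emptyset$) and $\Phi_o$ (otherwise). On $\Phi_d$ the event $A_{\phi_1} \cap A_{\phi_2}$ factors by layer independence, so this part is bounded by $U(R_1) U(R_2)$. On $\Phi_o$ the combined cover $\{E_k = \phi^{-1}(k)\}$ of $E(R_1) \cup E(R_2)$ contains at least one set that overlaps both $E(R_1)$ and $E(R_2)$, so Lemma~\ref{the:DisjointGraphCover} gives $\sum_k \norm{E_k} \wge r - 1 + |K|$ with $K = \ran(\phi_1) \cup \ran(\phi_2)$. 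This is exactly the inequality driving the proof of \eqref{eq:UpperBoundNew}, so rerunning that computation with the combined values of $r$ and $s$ bounds the $\Phi_o$ contribution by $(e^{r!cxm/n}-1)\, s^s x^{r-1} y^s n^{1-r}$, producing the $s^s$ summand in the claimed inequality.

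To trade $U(R_1) U(R_2)$ for $\beta := \pr(G \supset R_1) \pr(G \supset R_2)$, I would use the Bonferroni lower bound $\pr(G \supset R_i) \wge U(R_i) - \frac{1}{2} L(R_i)$ stated just before Lemma~\ref{the:UpperBoundNew} to write
\[
U(R_1) U(R_2) - \beta \wle \tfrac{1}{2} U(R_1) L(R_2) + \tfrac{1}{2} L(R_1) U(R_2).
\]
Plugging in Lemma~\ref{the:UpperBoundNew} for each factor separately, each product $U(R_i) L(R_j)$ takes the form $x^{r-1} y^{s+1} n^{1-r}$ times $s_i^{s_i}(2s_j)^{2s_j}$ times two exponential factors. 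I would then absorb these using $y \wle 1$ (so $y^{s+1} \wle y^s$), the bound $r_1! + r_2! \wle r!$ for $r_1, r_2 \ge 2$ (giving $e^{r_1!cxm/n} e^{r_2!cxm/n} \wle e^{r!cxm/n}$), and the elementary estimate $s_i^{s_i}(2s_j)^{2s_j} \wle (2s)^{2s}$, producing the $(2s)^{2s}$ summand. Adding the two contributions and invoking $s^s + (2s)^{2s} \wle s^s(1 + (2s)^{2s})$ delivers the claimed inequality.

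The conceptual content is the $\Phi_d/\Phi_o$ split together with Lemma~\ref{the:DisjointGraphCover}; the main obstacle is bookkeeping, namely verifying that the various ad hoc constants coming from two separate applications of Lemma~\ref{the:UpperBoundNew} really do collapse into the clean prefactors $s^s$, $(2s)^{2s}$, and $e^{r!cxm/n}$ permitted by the statement.
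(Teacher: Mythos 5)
Your proposal is correct and follows essentially the same route as the paper: the same Bonferroni reduction to $U(R_1\cup R_2)-U(R_1)U(R_2)+\frac12\bigl(U(R_1)L(R_2)+U(R_2)L(R_1)\bigr)$, the same split of colorings of $E(R_1)\cup E(R_2)$ by whether the two restrictions have disjoint ranges, and the same use of Lemma~\ref{the:DisjointGraphCover} (with $\abs{\cE_0}\ge 1$) to get $\sum_k\norm{E_k}\ge r-1+\abs{K}$ on the overlapping part. The constant bookkeeping at the end differs only cosmetically from the paper's $\bigl(1+(2s)^{2s}y\bigr)$ step and is valid.
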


\begin{proof}
(i) By Bonferroni's inequalities
\begin{align*}
 f(R_1) f(R_2)
 &\wge \Big( U(R_1) - \frac12 L(R_1) \Big) \Big(  U(R_2) - \frac12 L(R_2) \Big) \\
 &\wge U(R_1)U(R_2) - \frac12 \Big( U(R_1) L(R_2) + U(R_2) L(R_1) \Big).
\end{align*}
Since $\pr(G \supset R_1, G \supset R_2) = f(R_1 \cup R_2) \le U(R_1 \cup R_2)$, it follows that the left-hand side of \eqref{eq:DisjointGraphs} is bounded from above by
\begin{equation}
 \label{eq:PartialBound}
 \begin{aligned}
 f(R_1 \cup R_2) - f(R_1) f(R_2)
 &\wle U(R_1 \cup R_2) - U(R_1) U(R_2) \\
 &\qquad + \frac12 \Big( U(R_1) L(R_2) + U(R_2) L(R_1) \Big).
\end{aligned}
\end{equation}

(ii) Let us derive an upper bound for $U(R_1 \cup R_2)$. Denote by $\cF$ the set of functions $\phi \colon E(R_1) \cup E(R_2) \to \{1,\dots, m\}$, and denote by $\phi_i$ the restriction of $\phi \in \cF$ to $E(R_i)$. We partition $\cF = \cF^\perp \cup \cF^\parallel$, where $\cF^\perp$ denotes the set of functions $\phi \in \cF$ such that $\ran(\phi_1) \cap \ran(\phi_2) = \emptyset$, and $\cF^\parallel = \cF \setminus \cF^\perp$. Then for any $\phi \in \cF^\perp$, we find that
\[
 \pr(A_\phi)
 \weq \left( \prod_{k \in \ran(\phi_1)} \nquad \frac{(\pi)_{||\phi^{-1}(k)||, \abs{\phi^{-1}(k)}}}{(n)_{||\phi^{-1}(k)||}} \right)
 \left( \prod_{k \in \ran(\phi_2)} \nquad \frac{(\pi)_{||\phi^{-1}(k)||, \abs{\phi^{-1}(k)}}}{(n)_{||\phi^{-1}(k)||}} \right),
\]
from which we conclude that $\pr(A_\phi) = \pr(A_{\phi_1}) \pr(A_{\phi_2})$. Therefore,
\[
 \sum_{\phi \in \cF^\perp } \pr(A_\phi)
 \weq \sum_{\phi \in \cF^\perp } \pr(A_{\phi_1}) \pr(A_{\phi_2})
 \weq U(R_1) U(R_2).
\]
Consider next a function $\phi \in \cF^\parallel$.  Denote $K = \ran(\phi)$, $E_k = \phi^{-1}(k)$, $r_i = |V(R_i)|$, and $s_i = |E(R_i)|$. Then the inequalities $\norm{E_k} \le r_1+r_2$ and $(n)_a \ge n^a/a!$ for $n \ge a$ imply that
\[
 \pr(A_\phi)
 \wle \prod_{k \in K} \frac{ \norm{E_k} ! c x^\norm{E_k} y^\abs{E_k}}{n^{\norm{E_k}}}
 \wle ((r_1+r_2)!c)^{\abs{K}} \, (x/n)^{\sum_{k} \norm{E_k} } y^{\sum_k \abs{E_k}}.
\]
By Lemma \ref{the:DisjointGraphCover}, $\sum_{k \in K} \norm{E_k} \ge r_1 + r_2 + \abs{K} - 1$. Furthermore, $\sum_k \abs{E_k} = s_1 + s_2$. Hence,
\[
 \pr(A_\phi)
 \wle ((r_1+r_2)!c)^{\abs{K}} \, (x/n)^{r_1+r_2 + \abs{K}-1 } y^{s_1 + s_2},
\]
Now denote by $\cF_t$ the set of $\phi \in \cF$ such that $\abs{\ran(\phi)} = t$. Then we find that $\abs{\cF^\parallel \cap \cF_t} \le \abs{\cF_t} \le \binom{m}{t} t^{s_1+s_2}$. Denoting $r = r_1 + r_2$ and $s = s_1 + s_2$, it follows that
\begin{align*}
 \sum_{\phi \in \cF^\parallel} \pr(A_\phi)
 &\wle \sum_{t=1}^{s} \binom{m}{t} t^{s} (r!c)^t \, (x/n)^{r +t -1 } y^{s} \\
 &\wle s^{s} (x/n)^{r -1} y^{s} \sum_{t=1}^{s} \binom{m}{t}  (r!c)^t \, (x/n)^t \\
 &\wle s^{s} (x/n)^{r -1} y^{s} \sum_{t=1}^{s} \frac{(r!c x m/n)^t}{t!}.
\end{align*}
Hence,
\[
 \sum_{\phi \in \cF^\parallel} \pr(A_\phi)
 \wle \left( e^{r!c x m/n} - 1 \right) s^s x^{r -1} y^s  n^{1-r}.
\]
We conclude that
\begin{equation}
\label{eq:bonferroni1}
 U(R_1 \cup R_2) - U(R_1) U(R_2)
 \wle \left( e^{r!c x m/n} - 1 \right) s^s x^{r -1} y^s  n^{1-r}.
\end{equation}

(iii) Let us derive an upper bound for the second term on the right-hand side of \eqref{eq:PartialBound}. Denoting $c_i = \max_{E \subset E(R_i)} \frac{ (\pi)_{\norm{E}, \abs{E}}}{x^\norm{E} y^\abs{E}}$, Lemma~\ref{the:UpperBoundNew} implies that 
\begin{align*}
U(R_i) &\wle ( e^{r_i! c_i x m/n} - 1 ) s_i^{s_i}  x^{r_i-1} y^{s_i} n^{1-r_i}, \quad \text{and} \\
L(R_i) &\wle ( e^{r_i! c_i x m/n} - 1 ) (2s_i)^{2s_i} x^{r_i} y^{s_i+1} n^{-r_i}. 
\end{align*}
Therefore,
\begin{equation}
\label{eq:bonferroni2}
  \frac12 \Big( U(R_1) L(R_2) + U(R_2) L(R_1)  \Big)
 \wle e^{r! c x m/n} s^s (2s)^{2s} x^{r-1} y^{s+1} n^{1-r},
\end{equation}
where we used $c_1, c_2 \le c$ and $r_1! + r_2! \le r!$.

(iv) Combining \eqref{eq:PartialBound} with \eqref{eq:bonferroni1} and \eqref{eq:bonferroni2} gives
\[
  f(R_1 \cup R_2) - f(R_1) f(R_2)
 \wle e^{r!c x m/n} \left( 1 + (2s)^{2s} y \right) s^s x^{r -1} y^s  n^{1-r},
\]
and the claim follows by $y \le 1$.
\end{proof}

\subsection{Proof of Theorem~\ref{thm:mainvariance}}

\begin{proof}[Proof of Theorem~\ref{thm:mainvariance}]

In the following we denote $q_n = (\pi_n)_{r,s}^{1/s}$.

(i) Let us first make an extra assumption that the support of $\pi_n$ is contained in $\{0,\dots, M\} \times [0,1]$ for all $n$, where $M \ge 1$ is some fixed integer. Then $(\pi_n)_{a,b} \le M^a (\pi_n)_{0,b}$, and
\[
 \frac{(\pi_n)_{\norm{E},\abs{E}}}{M^\norm{E} q_n^\abs{E}}
 \wle \max_{1 \le b \le 2s} \frac{(\pi_n)_{0,b}}{q_n^b}
 \ =: \ d'_n
\]
for all $n$ and for any set $E$ of node pairs from $\{1,\dots,n\}$.

(ii) We start by deriving an upper bound for the variance of $N_R = N_R(G_n)$. By writing $N_R = \sum_{R' \in \Sub (R,K_n)} \ind(G_n \supset R')$, $\Var( N_R ) = \Cov(N_R, N_R)$, and applying the bilinearity of the covariance operator, we find that $\Var(N_{R}) = \sum_{t=0}^r V_{n,t}$, where
\begin{align*}
 V_{n,t} \weq \sum_{(R', R'') \in \cR_{n,t}} \bigg( f(R' \cup R'') - f(R') f(R'') \bigg),
\end{align*}
where $\cR_{n,t}$ denotes the set of pairs $(R',R'')$ of distinct $R$-isomorphic graphs such that $V(R'), V(R'') \subset \{1,\dots,n\}$ and $\abs{V(R') \cap V(R'')} = t$.

Let $t=0$. Since $(\pi_n)_{\norm{E}, |E|} \le M^{\norm{E}}$, we may normalize $(q_n)$ to be in $(0,1]$. Lemma \ref{thm:disjointcliques} applied with $x=M$ and $y = q_n$ implies that for any $(R',R'') \in \cR_{n,0}$,
\[
 f(R' \cup R'') -  f(R')f(R'')
 \wle e^{(2r)! d_{0,n} M m_n/n} \left( 1 + (4s)^{4s} \right) (2s)^{2s} M^{2r-1} q_n^{2s} n^{1-2r}.
\]
where $d_{0,n} = \max_{E \subset E(R') \cup E(R'')} \frac{ (\pi_n)_{\norm{E}, \abs{E}}}{M^\norm{E} q_n^\abs{E}}$.  The extra assumption of the layer sizes being bounded by $M$ implies that $d_{0,n} \le d_{n}'$.
Assumption $(\pi_n)_{0,2s} \lesim (\pi_n)_{r,s}^2$ together with Jensen's inequality further implies that
$(\pi_n)_{0,b}^{1/b} \lesim q_n$ for all $1 \le b \le 2s$, so that $d_n' \lesim 1$.
Hence $d_{0,n} \lesim 1$.
Because $\abs{\cR_{n,0}} \le ( \frac{(n)_r}{\abs{\Aut(R)}} )^2 \le n^{2r}$
and $m_n \lesim n$, we conclude that $V_{n,0} \lesim q_n^{2s} n$.

For $t \ge 1$, let $(R',R'') \in \cR_{n,t}$. The graph $R' \cup R''$ is then connected and such that $\abs{V(R' \cup R'')} = 2r-t$ and $s \le \abs{E(R' \cup R'')} \le 2s$. Lemma~\ref{the:UpperBoundNew} applied to  $R' \cup R''$ with $x = M$, $y = q_n$  shows that
\[
 f(R' \cup R'') \wle \left( e^{(2r-t)!d_{t,n} M m_n/n} - 1 \right) (2s)^{2s} M^{2r-t-1} q_n^{s} n^{1-2r+t},
\]
where $d_{t,n} = \max_{E \subset E(R') \cup E(R'')} \frac{ (\pi_n)_{\norm{E}, \abs{E}}}{M^\norm{E} q_n^\abs{E}}$ is again bounded by $d_{t,n} \le d'_n$. Because $d'_n \lesim 1$ and $\abs{\cR_{n,t}} \le \binom{n}{2r-t} (r!)^2 \le (r!) n^{2r-t}$, it follows that $V_{n,t} \lesim q_n^{s} n$ for all $1 \le t \le r$. Hence
$\Var(N_R) \lesim q_n^{s} n$.

Next, Theorem \ref{thm:mainexpectedvalue} confirms that $\E N_{R}(G_n) \asymp n (\pi_n)_{r,s} \asymp q_n^{s} n$, so that
\[
 \frac{\Var(N_{R}(G_n))}{( \E N_R(G_n) )^2} 
 \wlesim \frac{q_n^{s} n}{\big( q_n^{s} n \big)^2}
 \wasymp \frac{1}{n (\pi_n)_{r,s} }
 \wll 1
\]
due to assumption $(\pi_n)_{r,s} \gg n^{-1}$, and the claim follows by applying Chebyshev's inequality to
the random variable $\frac{{N_R - \E N_R}}{\E N_R}$.

(iii) Let us continue the proof, now without the extra assumption of bounded layer sizes. Let $S_n = N_R(G_n)$ be the number of $R$-isomorphic subgraphs of $G_n$, and let $S_{n,M}$ be the corresponding count for the layer-truncated graph
\[
V(G_n^M) = [n], \qquad E(G_{n}^M) = \bigcup_{k: \, X_{n,k} \le M} E(G_{n,k}).
\]
Then $S_n - S_{n,M}$ counts the $R$-isomorphic subgraphs of $G_n$ that vanish when layers larger than $M$ are removed.  Exchangeability implies that
\[
 \E (S_n - S_{n,M})
 \weq \abs{\cG_n(R)} \,\, \pr( G_n \supset R, \, \Gnm \not\supset R),
\]
where $\cG_n(R)$ denotes the set of $R$-isomorphic graphs with node set contained in $\{1,\dots, n\}$. We will first verify that
\begin{equation}
 \label{eq:TruncatedUnionKey}
 \pr( G_n \supset R, \, \Gnm \not\supset R)
 \wle s^s \hat{c}_{M,n} q_n^s n^{1-r} (r! m_n/n) e^{r! c_n m_n/n},
\end{equation}
where
\[
 c_n = \max_{E \subset E(R)} \frac{(\pi_n)_{\norm{E}, \abs{E}}}{q_n^\abs{E}}
 \qquad \text{and} \qquad
 \hat{c}_{M,n} = \max_{E \subset E(R)} \frac{(\pi_n^M)_{\norm{E}, \abs{E}}}{q_n^\abs{E}},
\]
and we recall the definition of $(\pi_n^M)_{\norm{E}, \abs{E}}$ in \eqref{eq:pinmdef}.

The union bound implies that
\begin{equation}
 \label{eq:TruncatedUnion0}
 \pr( G_n \supset R, \Gnm \not\supset R)
 \wle \sum_\phi \pr( A_\phi, \Gnm \not\supset R),
\end{equation}
where the sum is over all $\phi\!:\! E(R) \to \{1,\dots,m\}$.
On the event $A_\phi \cap \{ \Gnm \not\supset R\}$, 
there exists $k' \in \ran(\phi)$ such that $X_{n,k'} > M$. Therefore,
\begin{equation}
 \label{eq:TruncatedUnion1}
 \pr( A_\phi, \, \Gnm \not\supset R)
 \wle \sum_{k' \in \ran(\phi)} \pr(A_\phi, \, X_{n,k'} > M).
\end{equation}
Denote $E_k = \phi^{-1}(k)$ and $K = \ran(\phi)$. The independence of the layers then implies that
\begin{align*}
 \pr(A_\phi, \, X_{n,k'} > M)
 &\weq \frac{(\pi^M_n)_{\norm{E_{k'}}, \abs{E_{k'}}}}{(n)_{\norm{E_{k'}}}}
 \prod_{k \in K \setminus \{k'\}} \nhquad \frac{(\pi_n)_{\norm{E_{k}}, \abs{E_{k}}}}{(n)_{\norm{E_{k}}}} \\
 &\wle \frac{\norm{E_{k'}}! \hat{c}_{M,n} q_n^\abs{E_{k'}}}{n^{\norm{E_{k'}}}}
 \prod_{k \in K \setminus \{k'\}} \frac{\norm{E_{k}}! c_n q_n^\abs{E_k}}{n^{\norm{E_{k}}}} \\
 &\wle \hat{c}_{M,n} c_n^{\abs{K}-1} (r!)^\abs{K}   n^{- \sum_k \norm{E_k}} q_n^{\sum_k \abs{E_k}}.
\end{align*}
By Lemma \ref{the:GoodCover}, $\sum_k \norm{E_k} \ge r-1 + \abs{K}$, and moreover, $\sum_k \abs{E_k} = s$. These together with \eqref{eq:TruncatedUnion1} give
\begin{align*}
 \pr(A_\phi, \, \Gnm \not\supset R)
 &\wle \hat{c}_{M,n} c_n^{\abs{K}-1}  n^{1-r} \abs{K} (r!)^\abs{K}   n^{-\abs{K}} q_n^s.
\end{align*}
By noting that the number of $\phi \colon E(R) \to \{1,\dots,m\}$ with range of size $t$ is bounded by $\binom{m}{t} t^s$, it follows by  \eqref{eq:TruncatedUnion0} that
\begin{align*}
 \pr( G_n \supset R, \, \Gnm  \not\supset R)
 &\wle \sum_{t=1}^s \binom{m}{t} t^s \hat{c}_{M,n} c_n^{t-1} q_n^s n^{1-r} t (r!)^t   n^{-t} \\
 &\wle s^s \hat{c}_{M,n} q_n^s n^{1-r} (r! m/n) \sum_{t=1}^s \frac{(r! c_n m/n)^{t-1}}{(t-1)!},
\end{align*}
from which \eqref{eq:TruncatedUnionKey} follows.

Next, recall from Theorem~\ref{thm:mainexpectedvalue} that
$\pr( G_n \supset R) \asymp n^{1-r} (\pi_n)_{r,s}$.
By combining this with \eqref{eq:TruncatedUnionKey}, we see that there exist positive constants $n_0, a_1,a_2$ (not depending on $M$ or $n$) such that
\begin{align*}
 \frac{\E (S_n - S_{n,M})}{\E S_n}
 \weq \frac{ \pr( G_n \supset R, \, \Gnm \not\supset R) } { \pr( G_n \supset R ) }
 \wle \frac{a_2 \hat{c}_{M,n} n^{1-r} (\pi_n)_{r,s}   } { a_1 n^{1-r} (\pi_n)_{r,s}}
\end{align*}
for all $n \ge n_0$. Therefore,
\begin{align*}
 \limsup_{n \to \infty} \frac{\E (S_n - S_{n,M})}{\E S_n}
 \wle (a_2/a_1) \sup_n \hat{c}_{M,n}
\end{align*}
for all $M$. Because  $\frac{S_{n,M}}{\E S_{n,M}} \prto 1$ for every $M$ (by parts (i) and (ii)), and $ \sup_n \hat{c}_{M,n} \to 0$ as $M \to \infty$, the general claim follows by Lemma~\ref{the:Truncated2Nontruncated} by setting $h(M) = \sup_n \hat{c}_{M,n}$ and noting that $S_n-S_{n,M} = |S_{n,M}-S_n|$.
\end{proof}

\paragraph{From truncated limits to nontruncated limits.}

\begin{lemma}
\label{the:Truncated2Nontruncated}
Let $S_n, S_{n,M} \ge 0$ be random variables such that for any $M \in \Z_+$, $\frac{S_{n,M}}{\E S_{n,M}} \prto 1$ as $n \to \infty$, and $\limsup_{n \to \infty} \frac{\E \abs{S_{n,M} - S_n}}{\E S_n} \le h(M)$ for some function $h(M)$ such that $\lim_{M \to \infty} h(M) = 0$. Then $\frac{S_{n}}{\E S_{n}} \prto 1$ as $n \to \infty$.
\end{lemma}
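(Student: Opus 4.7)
The plan is to prove convergence in probability by decomposing $S_n/\E S_n - 1$ via the triangle inequality into three contributions: a truncation error at the random-variable level, the normalized fluctuation of $S_{n,M}$, and a deterministic bias coming from the difference of expectations. The first and third will be controlled by the $L^1$-closeness hypothesis, while the second is exactly where the assumed in-probability convergence $S_{n,M}/\E S_{n,M} \prto 1$ enters.

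First, fix $\varepsilon>0$ and choose $\delta \in (0, \varepsilon/6)$. By the hypothesis $\lim_{M\to\infty} h(M) = 0$, we may fix $M = M(\delta)$ with $h(M) < \delta$. Then for all sufficiently large $n$ (say $n \ge n_0$), we have $\E\abs{S_n - S_{n,M}}/\E S_n \le 2\delta$, and therefore also $\abs{\E S_n - \E S_{n,M}}/\E S_n \le 2\delta$, which gives $\E S_{n,M}/\E S_n \in [1 - 2\delta, 1 + 2\delta]$. The key triangle-inequality decomposition I would use is
\[
 \left\lvert \frac{S_n}{\E S_n} - 1 \right\rvert
 \wle \frac{\abs{S_n - S_{n,M}}}{\E S_n}
 + \frac{\E S_{n,M}}{\E S_n}\left\lvert \frac{S_{n,M}}{\E S_{n,M}} - 1 \right\rvert
 + \left\lvert \frac{\E S_{n,M}}{\E S_n} - 1 \right\rvert,
\]
valid on the event $\E S_{n,M}>0$ (handled separately if $\E S_{n,M}=0$, which forces $S_{n,M}\equiv 0$ and is compatible with the bound for large $n$ since then $h(M)$ would need to control everything). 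The third term is already bounded by $2\delta < \varepsilon/3$ for $n \ge n_0$.

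Next, I would apply Markov's inequality to the first term: $\pr(\abs{S_n - S_{n,M}}/\E S_n > \varepsilon/3) \le (3/\varepsilon)\cdot \E\abs{S_n - S_{n,M}}/\E S_n \le 6\delta/\varepsilon$ for $n \ge n_0$. For the second term, using $\E S_{n,M}/\E S_n \le 1 + 2\delta$,
\[
 \pr\!\left( \tfrac{\E S_{n,M}}{\E S_n}\bigl\lvert \tfrac{S_{n,M}}{\E S_{n,M}} - 1\bigr\rvert > \tfrac{\varepsilon}{3} \right)
 \wle \pr\!\left( \bigl\lvert \tfrac{S_{n,M}}{\E S_{n,M}} - 1\bigr\rvert > \tfrac{\varepsilon}{3(1+2\delta)} \right)
 \wto 0
\]
as $n \to \infty$ by the assumed convergence in probability (with $M$ now fixed). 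Combining the three estimates yields $\limsup_{n\to\infty} \pr(\abs{S_n/\E S_n - 1} > \varepsilon) \le 6\delta/\varepsilon$.

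Finally, since $\delta \in (0, \varepsilon/6)$ was arbitrary (each choice possibly requiring a different $M$ and $n_0$), letting $\delta \downarrow 0$ gives $\limsup_{n\to\infty} \pr(\abs{S_n/\E S_n - 1} > \varepsilon) = 0$, which is the desired conclusion. I do not anticipate a serious obstacle here; the main care point is making sure the order of the quantifiers is correct—$\delta$ is chosen first (depending on $\varepsilon$), then $M$ depending on $\delta$, and only then is $n$ sent to infinity—so that the in-probability convergence of $S_{n,M}/\E S_{n,M}$ can legitimately be invoked with $M$ held fixed.
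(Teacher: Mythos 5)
Your proof is correct and takes essentially the same route as the paper's: both decompose $\frac{S_n}{\E S_n}-1$ exactly into the normalized fluctuation of $S_{n,M}$, the random truncation error, and the deterministic bias $\frac{\E(S_{n,M}-S_n)}{\E S_n}$, then control the first via the assumed convergence in probability (with $M$ fixed) and the other two via the $L^1$ hypothesis and Markov's inequality before sending $M\to\infty$. The only difference is cosmetic grouping: your bias term is purely deterministic, whereas the paper attaches the factor $\frac{S_{n,M}}{\E S_{n,M}}$ to it and disposes of it by Markov as well.
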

\begin{proof}
We may write
\begin{align*}
 \frac{S_{n}}{\E S_{n}} - 1
 &\weq \underbrace{\left( \frac{S_{n,M}}{\E S_{n,M}} -  1 \right)}_{A_{n,M}}
 - \underbrace{ \frac{S_{n,M} - S_n}{\E S_{n}} }_{B_{n,M}}
 + \underbrace{ \left( \frac{\E (S_{n,M} - S_n)}{\E S_{n}} \right) \frac{S_{n,M}}{\E S_{n,M}} }_{C_{n,M}}.
\end{align*}
Markov's inequality then implies that for any $\epsilon > 0$ and $M \ge 1$,
\begin{align*}
 \pr \Big( \big\lvert \frac{S_{n}}{\E S_{n}} - 1 \big\rvert > \epsilon \Big)
 &\wle \pr \Big( \abs{A_{n,M}} > \epsilon/3 \Big)
 + \pr \Big( \abs{B_{n,M}} > \epsilon/3 \Big)
 + \pr \Big( \abs{C_{n,M}} > \epsilon/3 \Big) \\
 &\wle \pr \Big( \abs{A_{n,M}} > \epsilon/3 \Big)
 + 2(\epsilon/3)^{-1} \Big( \E \abs{B_{n,M}} + \E \abs{C_{n,M}} \Big) \\
 &\weq \pr \Big( \abs{A_{n,M}} > \epsilon/3 \Big)
 + 4(\epsilon/3)^{-1} \E \abs{B_{n,M}}.
\end{align*}
Hence,
\begin{align*}
 \limsup_{n \to \infty} \pr \Big( \big\lvert \frac{S_{n}}{\E S_{n}} - 1 \big\rvert > \epsilon \Big)
 &\wle 4(\epsilon/3)^{-1} h(M).
\end{align*}
Because the above inequality holds for all $M$ and $\epsilon>0$, the claim follows.
\end{proof}

\subsection{Analysis of clique and cycle frequencies}
The following lemma gives a simpler alternative for the last condition of Theorem \ref{thm:mainvariance}.  For cycles and cliques, these simpler conditions are also necessary, as the conditions given in $(i)$ and $(ii)$ below correspond to specific edge subsets $E \subset E(R)$. 
This is not true for a general $R$. For example, consider a 5-star graph and condition $(i)$ below. Then $r = 6$ and $\norm{E} = \abs{E} + 1$ for all non-empty $E$. Since there does not exist $E$ such that $\norm{E} = r = 6$ and $\abs{E} = r/2 = 3$, condition $(i)$ becomes superfluous.

Lemma \ref{lemma:suppim} is analogous to a classical result on uniform integrability, but for joint distributions of two random variables. Lemmas \ref{lemma_2_communities}--\ref{lemma:ucycle} give the Bonferroni upper bounds for clique and cycle probabilities.
\begin{lemma}
\label{lemma:r2moments}
Let $R$ be a graph with $r$ nodes. Assume that $\pi_n \to \pi$ weakly. If either
\begin{enumerate}[(i)]
\item $r$ is even and
\[
\sup_n \, (\pi_n^M)_{r,r/2} \to 0 \quad \text{as} \quad M \to \infty, \text{ or}
\]
\item $r$ is odd and
\[
\sup_n \, (\pi_n^M)_{r-1,\frac{r-1}{2}} \to 0 \quad \text{and} \quad \sup_n \, (\pi_n^M)_{r,\frac{r+1}{2}}  \to 0 \quad \text{as} \quad  M \to \infty,
\]
\end{enumerate}
then
\begin{equation*}
 \sup_n \max_{E \subset E(R)} (\pi_n^M)_{\norm{E},|E|}
 \wto 0 \quad \text{as} \quad M \to \infty.
\end{equation*}
\end{lemma}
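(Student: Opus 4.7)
Since $Y \le 1$ and each edge contributes two endpoints to $\norm{E}$, one has $\abs{E} \ge \lceil \norm{E}/2 \rceil$, so $Y^{\abs{E}} \le Y^{\lceil \norm{E}/2 \rceil}$ and therefore
\[
 (\pi_n^M)_{\norm{E}, \abs{E}} \wle (\pi_n^M)_{\norm{E}, \lceil \norm{E}/2 \rceil}.
\]
It thus suffices to show $\sup_n (\pi_n^M)_{a, \lceil a/2 \rceil} \to 0$ as $M \to \infty$ for each $0 \le a \le r$. Weak convergence of $\pi_n$ implies that $\{X_n\}$ is tight, which handles $a = 0$ (since $(\pi_n^M)_{0,0} = \pr(X_n > M)$) and will also be used below. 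Let $R$ be the largest even integer at most $r$; then $R = r$ when $r$ is even and $R = r - 1$ when $r$ is odd, and in either case the hypothesis supplies $\sup_n (\pi_n^M)_{R, R/2} \to 0$.

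\textbf{Main argument.} For even $a$ with $2 \le a \le R$, apply H\"older's inequality with conjugate exponents $p = R/a$ and $q = R/(R-a)$ (the case $a = R$ is a tautology):
\[
 \E\bigl[(X)_a Y^{a/2} \I(X>M)\bigr]
 \wle \E\bigl[(X)_a^{R/a} Y^{R/2} \I(X>M)\bigr]^{a/R} \pr(X_n > M)^{(R-a)/R}.
\]
For $M \ge 2R$, the Pochhammer estimate $(X)_R \ge (X - R + 1)^R \ge (X/2)^R$ gives $X^R \le 2^R (X)_R$ on $\{X > M\}$, and since $(X)_a \le X^a$ one obtains $(X)_a^{R/a} \le 2^R (X)_R$ there. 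Substituting,
\[
 (\pi_n^M)_{a, a/2} \wle 2^a (\pi_n^M)_{R, R/2}^{a/R} \pr(X_n > M)^{(R-a)/R},
\]
and both factors vanish uniformly in $n$ as $M \to \infty$. For odd $a$ with $3 \le a$ and $a + 1 \le R$, the identity $(X)_{a+1} = (X)_a (X - a)$ yields $(X)_a \le (X)_{a+1}/(M - a + 1)$ on $\{X > M\}$ (for $M \ge a$), so (noting that $a+1$ is even and $\lceil a/2 \rceil = (a+1)/2$)
\[
 (\pi_n^M)_{a, (a+1)/2} \wle \frac{(\pi_n^M)_{a+1, (a+1)/2}}{M - a + 1},
\]
which reduces to the even case already treated. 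When $r$ is odd, the only remaining case is $a = r$, covered directly by the second part of the hypothesis, $\sup_n (\pi_n^M)_{r, (r+1)/2} \to 0$.

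\textbf{Main obstacle.} The crux is the H\"older interpolation, which separates the dependence on the hypothesis moment $(\pi_n^M)_{R, R/2}$ (raised to the fractional power $a/R$) from the remaining mass, controlled by the tightness factor $\pr(X_n > M)^{(R-a)/R}$. The elementary Pochhammer bound $X^R \le 2^R (X)_R$ on $\{X \ge 2R\}$ is the essential technical ingredient that makes this interpolation work; the rest is bookkeeping across the parities of $r$ and $a$.
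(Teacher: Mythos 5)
Your proof is correct, and while it shares the paper's skeleton — reduce to exponent $\lceil \norm{E}/2\rceil$ via $Y\le 1$ and the fact that the graph induced by $E$ has no isolated nodes, split on parity, and control the residual mass by tightness of $\{X_n\}$ (which weak convergence does supply) — the key interpolation step is carried out by a genuinely different device. The paper dominates $(X_n)_{\norm{E}}Y_n^{\norm{E}/2}$ \emph{pointwise} by $1+r!^{r/\norm{E}}+r!(X_n)_rY_n^{r/2}$, using that $t\mapsto t^{r/\norm{E}}$ is increasing past $1$ together with $(x)_{\norm{E}}^{r}\le (r!(x)_r)^{\norm{E}}$, so that the truncated expectation splits additively into a $\pr(X_n>M)$ term and the hypothesized moment. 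You instead interpolate multiplicatively via H\"older between the top even moment and the tail probability, with the Pochhammer comparison $X^{R}\le 2^{R}(X)_{R}$ on $\{X>M\}$, $M\ge 2R$, converting $(X)_a^{R/a}$ into $(X)_R$; your resulting bound $2^a(\pi_n^M)_{R,R/2}^{a/R}\,\pr(X_n>M)^{(R-a)/R}$ is a clean quantitative estimate that the paper's additive bound does not provide. Your handling of odd $a$ by dividing out one factor, $(X)_a\le (X)_{a+1}/(M-a+1)$, also differs from (and is slightly slicker than) the paper's, which simply reruns the even argument at level $r-1$ for every $E$ with $\norm{E}\le r-1$. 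Two cosmetic points: you overload the symbol $R$ (the graph of the statement versus the largest even integer at most $r$), so rename one of them; and your case list skips $a=1$, which is harmless since $\norm{E}\ge 2$ for every nonempty $E$ and the empty set is the case $a=0$, but this deserves a sentence.
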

\begin{proof}
Let $(X_n,Y_n)$ be a random vector with distribution $\pi_n$, and let $E \subset E(R)$. Because the graph induced by $E$ has no isolated nodes, $|E| \geq  \norm{E}/2$, and
\begin{align*}
(X_n)_{\norm{E}} Y_n^{|E|} \le (X_n)_{\norm{E}} Y_n^{\norm{E}/2}.
\end{align*}
Since $r/\norm{E} \geq 1$,
\begin{align*}
(X_n)_{\norm{E}} Y_n^{\norm{E}/2} &\wle 1+ \I((X_n)_{\norm{E}} Y_n^{\norm{E}/2} > 1) (X_n)_{\norm{E}} Y_n^{\norm{E}/2} \\
&\wle 1+ \I((X_n)_{\norm{E}} Y_n^{\norm{E}/2} > 1) (X_n)_{\norm{E}}^{r/\norm{E}} Y_n^{r/2}.
\end{align*}
We split further
\begin{align*}
(X_n)_{\norm{E}} Y_n^{\norm{E}/2} \wle 1&+ \I\Big[ (X_n)_{\norm{E}} Y_n^{\norm{E}/2} \in (1,r) \Big] \,(X_n)_{\norm{E}}^{r/\norm{E}} Y_n^{r/2} \\
&+ \I\Big[ (X_n)_{\norm{E}} Y_n^{\norm{E}/2} \geq r \Big] \, (X_n)_{\norm{E}}^{r/\norm{E}} Y_n^{r/2},
\end{align*}
and by applying $(x)_{\norm{E}}^r \le (r! (x)_r)^{\norm{E}} $ for the last term, it follows that
\begin{equation}
\label{eq:dominate1}
(X_n)_{\norm{E}} Y_n^{|E|} \wle 1 + r!^{r/\norm{E}} + r!(X_n)_r Y_n^{r/2}.
\end{equation}
Multiplying both sides by $\I(X_n>M)$ and taking $\E$ and $\sup_n$ gives the upper bound
\[
\sup_n \, \Big( (1+ r!^{r/\norm{E}})\pr (X_n>M) + r!\E[(X_n)_r Y_n^{r/2}\I(X_n>M)] \Big).
\]
When $M \to \infty$, the first term goes to zero by the weak convergence of $\pi_n$, and the second term goes to zero by assumption. This establishes $(i)$. 

If $r$ is odd and $\norm{E} \le r-1$, the same argument as above gives
\begin{equation}
\label{eq:dominate2}
(X_n)_{\norm{E}} Y_n^{|E|} \wle 1 + (r-1)!^{\frac{r-1}{\norm{E}}} + (r-1)!(X_n)_{r-1} Y_n^{\frac{r-1}{2}}.
\end{equation}
If $\norm{E}=r$, it suffices to note that $|E| \geq \frac{r+1}{2}$, hence
\begin{equation}
\label{eq:dominate3}
(X_n)_{\norm{E}} Y_n^{|E|} \le (X_n)_{r} Y_n^{\frac{r+1}{2}}.
\end{equation}
In either case multiplying by  $\I(X_n>M)$ and taking $\E$ and $\sup$ gives that $\sup_n \E[\I(X_n >M) (X_n)_{\norm{E}} Y_n^{|E|}] \to 0$, i.e., $(ii)$.
\end{proof}
\begin{lemma}
\label{lemma:suppim}
If $\pi_n \to \pi$ weakly and $(\pi_n)_{a,b} \to (\pi)_{a,b} < \infty$, Then
\[
\sup_n (\pi_n^M)_{a,b} \to 0 \quad \text{as} \quad M \to \infty,
\]
where $(\pi_n^M)_{a,b}$ is defined in \eqref{eq:pinmdef}.
\end{lemma}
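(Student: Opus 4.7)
\textbf{Plan for Lemma \ref{lemma:suppim}.} The plan is to derive the uniform tail bound from the classical principle that convergence of moments under weak convergence forces uniform integrability, specialised here to the tail in the first coordinate $X$. For an integer $M \ge 0$, I would introduce a continuous cutoff $h_M : \mathbb{R} \to [0,1]$ equal to $1$ on $(-\infty,M]$, equal to $0$ on $[M+1,\infty)$, and linear in between. Then the test function $g_M(x,y) := (x)_a y^b h_M(x)$ is bounded (by $(M+1)_a$) and continuous on $\mathbb{R}\times[0,1]$, so it is admissible for weak convergence. Since $\pi_n$ and $\pi$ are supported in $\mathbb{Z}_+ \times [0,1]$, one has $h_M(X) = \I(X \le M)$ under each of these measures, and consequently
\[
 (\pi_n^M)_{a,b} \weq (\pi_n)_{a,b} - \int g_M \, d\pi_n.
\]

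Sending $n \to \infty$ with $M$ fixed and applying weak convergence to the bounded continuous $g_M$ together with the hypothesis $(\pi_n)_{a,b} \to (\pi)_{a,b}$, I would obtain
\[
 \lim_{n \to \infty} (\pi_n^M)_{a,b} \weq (\pi)_{a,b} - \int g_M \, d\pi \weq \E_\pi \bigl[(X)_a Y^b \I(X>M)\bigr].
\]
Because $(X)_a Y^b$ is integrable under $\pi$ by the assumption $(\pi)_{a,b} < \infty$, dominated convergence forces this right-hand side to $0$ as $M \to \infty$.

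To upgrade this pointwise-in-$n$ statement to the uniform bound $\sup_n (\pi_n^M)_{a,b} \to 0$, given $\epsilon>0$ I would first pick $M_0$ making the displayed limit smaller than $\epsilon/2$, then pick $N$ with $(\pi_n^{M_0})_{a,b} < \epsilon$ for all $n \ge N$. For the finitely many remaining indices $n < N$, the support of $\pi_n$ in the first coordinate lies in $\{0,\dots,n\}$, so $(\pi_n^M)_{a,b} = 0$ whenever $M \ge n$; combined with the trivial monotonicity $(\pi_n^{M'})_{a,b} \le (\pi_n^M)_{a,b}$ for $M' \ge M$, this yields $\sup_n (\pi_n^M)_{a,b} \le \epsilon$ for all sufficiently large $M$. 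The only mild technical point is that the indicator $\I(\,\cdot\le M)$ is not continuous, which is precisely what the smooth cutoff $h_M$ together with the integer-valuedness of $X$ are introduced to bypass; beyond that the argument is routine.
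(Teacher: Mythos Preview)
Your proposal is correct and follows essentially the same route as the paper: both introduce a bounded continuous approximation to the truncated moment, apply weak convergence together with the hypothesis $(\pi_n)_{a,b}\to(\pi)_{a,b}$ to identify the limit of the tail, and then use dominated convergence under $\pi$ to send $M\to\infty$. Your version is slightly tidier in that exploiting the integer-valuedness of $X$ turns the approximation into an exact identity, and you spell out the ``standard argument'' for passing from $\limsup_n$ to $\sup_n$ (using that each $\pi_n$ has support in $\{0,\dots,n\}$), which the paper leaves implicit.
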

\begin{proof}
The claim is equivalent to $\lim_{M\to \infty} \limsup_n (\pi_n^M)_{a,b} = 0$ by a standard argument.
Let $(X_n,Y_n) \sim \pi_n$, and define
\[
h_M(x,y) = 
\begin{cases*} 
0 & if $x>M$ \\
(x)_a y^b & if $x<M/2$ \\
M-(x)_a y^b & if $x \in [M/2,M]$.
\end{cases*} 
\]
Then
\[
(X_n)_a Y_n^b \I (X_n > M) \wle (X_n)_a Y_n^b - h_M(X_n,Y_n).
\]
Since $h_M$ is continuous and bounded on $\mathbb{R}_+ \times [0,1]$, we have by weak convergence $\E[h_M(X_n,Y_n)] \to \E[h_M(X,Y)] < \infty$ as $n \to \infty$. Thus,
\[
\lim_{n\to\infty} \E[ (X_n)_a Y_n^b - h_M(X_n, Y_n) ]
\weq (\pi)_{a,b} - \E[h_M(X,Y)].
\]
Now $h_M(X,Y) \le (X)_a Y^b$, so by dominated convergence
\[
\lim_{M\to \infty} \E[ h_M(X, Y) ] \weq (\pi)_{a,b},
\]
and it follows that
\[
\lim_{n\to \infty} (X_n)_a Y_n^b \I (X_n > M) \to 0 \quad \text{as} \quad  M \to \infty.
\]
\end{proof}

\begin{lemma}
\label{lemma_2_communities}
Let $\{E_1, E_2 \}$ be an edge partition of $K_r$, $r\geq 3$. Then either $\norm{E_1} = r$ or $\norm{E_2} = r$. 
\end{lemma}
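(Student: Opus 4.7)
The plan is to argue by contradiction. Suppose, to the contrary, that $\norm{E_1} < r$ and $\norm{E_2} < r$. Then there exist vertices $v_1, v_2 \in V(K_r)$ such that $v_i$ is not incident to any edge of $E_i$ for $i=1,2$. Since $r \geq 3$, every vertex of $K_r$ is incident to at least $r-1 \geq 2$ edges, so $v_i$ must still be incident to some edges of $K_r$, which all must lie in the other part; in particular $v_1 \neq v_2$.

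Now consider the edge $e = \{v_1, v_2\} \in E(K_r)$. Because $\{E_1, E_2\}$ is a partition of $E(K_r)$, we have $e \in E_1$ or $e \in E_2$. In the first case $v_1$ is incident to an edge of $E_1$, contradicting the choice of $v_1$; in the second case $v_2$ is incident to an edge of $E_2$, contradicting the choice of $v_2$. Hence $\norm{E_1} = r$ or $\norm{E_2} = r$.

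The argument is short and the only subtle point is ensuring $v_1 \neq v_2$, which uses $r \geq 3$ (or more weakly, that each vertex of $K_r$ has positive degree). I do not anticipate any real obstacle: the key observation is simply that the single edge joining the two "missed" vertices forces a contradiction regardless of which part it is assigned to. A minor sanity check is the degenerate case where one part of the partition is empty; if $E_1 = \emptyset$ then $E_2 = E(K_r)$ and $\norm{E_2} = r$ trivially, so the statement is preserved under any convention about whether a partition allows empty blocks.
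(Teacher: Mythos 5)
Your proof is correct and takes essentially the same approach as the paper's: both select a vertex $v_i$ not covered by $E_i$ for each $i$ and derive a contradiction from an edge that neither part can contain, with the only cosmetic difference being that you rule out $v_1=v_2$ up front while the paper treats that case by picking a third vertex $v$ and using the edge $\{v_1,v\}$. No gaps.
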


\begin{proof}
Assume that $\norm{E_i} <r$ for $i= 1,2$.
Let $v_1 \in V(K_r) \setminus V(E_1)$, $v_2 \in V(K_r) \setminus V(E_2)$, and $v \in V(K_r) \setminus \{v_1, v_2\}$. If $v_1 = v_2$, then the edge $\{v_1, v\}$ is not in $E_1$ or $E_2$, and if $v_1 \neq v_2$, then the edge $\{v_1, v_2 \}$ is not in $E_1$ or $E_2$. This is a contradiction, because $\{E_1, E_2 \}$ is an edge partition.
\end{proof}

\begin{lemma}
\label{lemma:uclique}
Let $R = K_r$ be a clique with $r$ nodes.
Assume that $m_n \asymp n$ and \eqref{eq:maxcond} holds.
Then
\[
U(R) \weq (1+O(n^{-1})) m_n \frac{(\pi_n)_{r,\binom{r}{2}}}{(n)_{r}}.
\]
\end{lemma}

\begin{proof}
Denote $q_n = (\pi_n)_{r,s}^{1/s}$, where $s = \binom{r}{2}$ is the edge count of $K_r$.
We observe that $U(R) = \sum_{t=1}^s U_t(R)$ where
\[
 U_t(R)
 \weq \sum_{\phi: |\tran(\phi)| = t} \prod_{k \in \ran(\phi)} \frac{(\pi_n)_{||\phi^{-1}(k)||, \abs{\phi^{-1}(k)}}}{(n)_{||\phi^{-1}(k)||}},
\]
where the double sum is over all the mappings from $E(K_r)$ to $[m_n]$. If $s=1$, the result follows directly from the above identity. Assume that $s\geq 3$.
When $t=1$, we obtain
\begin{align}
\label{eq:cliqueonecover}
 U_1(R)
 \weq \sum_{\phi:|\tran (\phi)|=1} \prod_{k \in \ran(\phi)}
 \frac{(\pi_n)_{||\phi^{-1}(k)||, \abs{\phi^{-1}(k)}}}{(n)_{||\phi^{-1}(k)||}}
 \weq m_n \frac{(\pi_n)_{r, s}}{(n)_{r}}.
\end{align}

Let $t \geq 3$ and $\phi$ be such that $|\tran(\phi)| = t$. 
Now $\cE = \cup_{k \in \tran(\phi) } \{ \phi^{-1}(k) \}$ is a partition of $E(K_r)$. 
Since $K_r$ is connected, we can label the sets $\cE = \{E_1, \ldots, E_{t} \}$ so that $V(E_t) \cap V(E_{t-1}) \neq \emptyset$.
Define $E_{t-1}' = E_{t-1} \cup E_{t}$.
Then
\begin{align*}
\sum_{i=1}^t \norm{E_i} \weq \sum_{i=1}^{t-2} \norm{E_i} + \norm{E_{t-1}} + \norm{E_t} 
\wge \sum_{i=1}^{t-2} \norm{E_i} + \norm{E'_{t-1}} + 1.
\end{align*}
Since $\{E_1, \ldots, E'_{t-1}\}$ is now a partition of $E(K_r)$, we may continue recursively by defining $E'_{i-1} = E_{i-1} \cup E'_{i}$, where $V(E_{i-1}) \cap V(E'_i) \neq \emptyset$, and obtain
\begin{align*}
\sum_{i=1}^t \norm{E_i} \wge \norm{E_1} + \norm{E'_2} + t-2 \wge r +t,
\end{align*}
where the latter inequality follows from Lemma \ref{lemma_2_communities}. If $t=2$,   the same lemma gives $\sum_{i=1}^t \norm{E_i} \wge r +t$ directly.
It follows that for any $t\geq 2$,
\begin{align}
\label{eq:nbound}
\prod_{i=1}^t (n)_{\norm{E_i}} \wge \prod_{i=1}^t \frac{n^{\norm{E_i}}}{\norm{E_i}!} \wge \prod_{i=1}^t \frac{n^{\norm{E_i}}}{r!} \wge (r!)^{-t} n^{r+t}.
\end{align}
By assumption there is a constant $c_r$ such that $(\pi_n)_{\norm{E_i},|E_i|} \le  c_r q_n^{|E_i|}$ for all $E_i \subset E(K_r)$ and all $n$. This together with \eqref{eq:nbound} gives
\begin{align*}
\prod_{i=1}^t \frac{(\pi_n)_{\norm{E_i}, |E_{i}|}}{(n)_{\norm{E_i}}} \wle (r!)^t n^{-(r+t)} \prod_{i =1}^{t} (\pi_n)_{\norm{E_i}, |E_{i}|} &\wle (r!)^t n^{-(r+t)} \prod_{i =1}^{t} c_r q_n^{|E_i|}, \\
&\weq (r!)^t n^{-(r+t)} c_r^t q_n^{s}, 
\end{align*}
where we used  $\sum_{i=1}^t |E_i| = s$. It follows that
\begin{align*}
 \sum_{t=2}^s U_t(R)
 &\wle \sum_{t=2}^s \binom{m_n}{t}t^s (r!)^t n^{-(r+t)} c_r^t q_n^{s}.
\end{align*}
Using $\binom{m_n}{t} \le m_n^t/t!$ and $t \le s$ we further bound
\begin{align*}
 \sum_{t=2}^s U_t(R)
 &\wle (sq_n)^s n^{-r} \sum_{t=2}^s \frac{(r!)^t m_n^t n^{-t}c_r^t}{t!} \\
&\wle  (sq_n)^s n^{-r} (e^{r!  c_rm_n/n} -1 - r!c_rm_n/n).
\end{align*}
We find that $\sum_{t=2}^s U_t(R) \lesim n^{-r} q_n^s \asymp n^{-r} (\pi_n)_{r,s}$,
whereas \eqref{eq:cliqueonecover} shows that $U_1(R) \asymp m_n n^{-r} (\pi_n)_{r,s}$.
This establishes the claim.
\end{proof}

\begin{lemma}
\label{lemma:ucycle}
Let $R = C_r$ be a cycle with $r$ nodes.
Assume that $m_n \asymp n$ and \eqref{eq:maxcond} holds.
Then
\[
 U(C_r)
 \weq (1+O(n^{-1})) m_n \frac{(\pi_n)_{r,r}}{(n)_r}.
\]
\end{lemma}
\begin{proof}
Recall that
\[
U(C_r) \weq \sum_{t=1}^r \sum_{\phi: |\tran(\phi)| = t} \prod_{k \in \ran(\phi)} \frac{(\pi_n)_{||\phi^{-1}(k)||, \abs{\phi^{-1}(k)}}}{(n)_{||\phi^{-1}(k)||}}.
\]
We show that the leading term is obtained with $t=1$:
\begin{align} 
\label{eq:cycleonecover}
 \sum_{\phi: |\tran(\phi)| = 1}
 \prod_{k \in \ran(\phi)} \frac{(\pi_n)_{||\phi^{-1}(k)||, \abs{\phi^{-1}(k)}}}{(n)_{||\phi^{-1}(k)||}}
 = m_n \frac{(\pi_n)_{r,r}}{(n)_r}.
\end{align}

Let $t\geq2$ and $|\tran(\phi)|=t$. Define $\cE = \cup_{k \in \tran(\phi) } \{ \phi^{-1}(k) \} $, where we choose a labeling $\cE = \{E_1, \ldots, E_t \}$ with $V(E_t) \cap V(E_{t-1}) \neq \emptyset$. 

Let $\{e_1, \ldots, e_L \} \subset E_t$ be the longest path in $E_t$. If $L \geq 2$, we choose an ordering such that $e_i \cap e_{i-1} \neq \emptyset$ for $i=2,\ldots, L$.  
Then $e_1 \setminus e_2$ and $e_L \setminus e_{L-1}$ are contained in both $V(E_t)$ and $\cup_{i=1}^{t-1} V(E_i)$. Denoting $V_i = V(E_i)$ and $W_{i-1} = \cup_{j=1}^{i-1} V(E_j)$, it follows that
\begin{align*}
r = | \bigcup_{i=1}^t V_i | &=  \norm{E_1} + | V_t \setminus W_{t-1} | + \sum_{i=2}^{t-1} | V_i \setminus W_{i-1} | \\
& \le  \norm{E_1} + (\norm{E_t} -2) + \sum_{i=2}^{t-1} (\norm{E_i}-1) \\
&= -t+\sum_{i=1}^t \norm{E_i}.
\end{align*}
This bound implies $\prod_{i=1}^t (n)_{\norm{E_i}} \geq (r!)^{-t} n^{r+t}$ by \eqref{eq:nbound}, and so
\[
 \prod_{k \in \ran(\phi)} \frac{(\pi_n)_{||\phi^{-1}(k)||, \abs{\phi^{-1}(k)}}}{(n)_{||\phi^{-1}(k)||}}  \wle (r!)^{t} n^{-(r+t)}  \prod_{k \in \ran(\phi)} (\pi_n)_{||\phi^{-1}(k)||, \abs{\phi^{-1}(k)}}.
\] 
By assumption there is a $c_r$ such that $(\pi_n)_{||\phi^{-1}(k)||, \abs{\phi^{-1}(k)}} \le c_r q_n^{\abs{\phi^{-1}(k)}}$, hence
\[
\prod_{k \in \ran(\phi)} \frac{(\pi_n)_{||\phi^{-1}(k)||, \abs{\phi^{-1}(k)}}}{(n)_{||\phi^{-1}(k)||}}  \wle (r!)^{t} n^{-(r+t)}  c_r^t q_n^r.
\] 
This gives
\begin{align*}
\sum_{t=2}^r \sum_{\phi: |\tran(\phi)| = t} \prod_{k \in \ran(\phi)} \frac{(\pi_n)_{||\phi^{-1}(k)||, \abs{\phi^{-1}(k)}}}{(n)_{||\phi^{-1}(k)||}}  \le \sum_{t=2}^r \binom{m_n}{t} t^r (r!)^{t} n^{-(r+t)}  c_r^t q_n^r,
\end{align*}
where we bound
\begin{align*}
\sum_{t=2}^r \binom{m_n}{t} t^r (r!)^{t} n^{-(r+t)}  c_r^t q_n^r &\weq (q_nr)^r n^{-r} \sum_{t=2}^r m_n^t (t!)^{-1} (r!)^{t} n^{-t}  c_r^t \\
&\wle (q_nr)^r n^{-r} (e^{r! c_r m_n/n}-1-r!c_rm_n/n).
\end{align*}
The claim follows from this together with \eqref{eq:cycleonecover}, and  $c_r \lesim 1$.
\end{proof}

\begin{proof}[Proof of Theorem~\ref{the:piconverges}]
Denote $s = \binom{r}{2}$ and $q_n = (\pi_n)_{r,s}^{1/s}$.
We first verify that $f(K_r) = (1 + O(n^{-1}) ) U(K_r)$. We note that 
\[
 U(K_r)
 \weq \sum_\phi \pr(A_\phi) \wge \sum_{k=1}^m \pr(G_{n,k} \supset R)
 \weq m_n \frac{(\pi_n)_{r,s}}{(n)_r}
 \wasymp m_n n^{-r} (\pi_n)_{r,s}.
\]
On the other hand, by applying Lemma~\ref{the:UpperBoundNew} with $x=1$ and $y=q_n$, we find that
$L(K_r) \lesim n^{-r}q_n^{s+1} \asymp m_n^{-1} q_n U(K_r) \asymp m_n^{-1} U(K_r)$,
and so $f(K_r) = (1+O(n^{-1})) U(K_r)$ follows by Bonferroni's inequalities. 

By Lemma \ref{lemma:uclique} we have $U(K_r) = (1 + O(n^{-1}) ) m_n (\pi_n)_{r, s} (n)_r^{-1}$, so it follows that
\begin{align}
 \label{eq:expectednkr}
 \E N_{K_r}(G_n)
 \weq \frac{(n)_r}{\abs{\Aut(K_r)}} f(K_r)
 &\weq \frac{(n)_r}{r!} (1 + O(n^{-1}q_n))U(K_r) \nonumber \\ 
 &\weq \frac{1}{r!}(1 + O(n^{-1}) ) m_n (\pi_n)_{r, s}. 
\end{align}
The same arguments give the expected value for $C_r$ together with Lemma~\ref{lemma:ucycle}.

We now verify the conditions of Theorem \ref{thm:mainvariance}.
By Assumption \ref{ass:piconverges} and Lemma \ref{lemma:suppim},
the conditions of Lemma \ref{lemma:r2moments} are satisfied, and it follows that
$\sup_n \max_{E \subset E(R)} (\pi_n^M)_{\norm{E}, \abs{E}} \to 0$.
Hence,  Condition~\ref{eq:UI}
of Theorem \ref{thm:mainvariance} is satisfied.
Let now $(X_n,Y_n) \sim \pi_n$ and fix $E \subset E(K_r)$.
By the mapping theorem the random variables $(X_n)_{\norm{E}} Y_n^{|E|}$ converge weakly, and by Skorohod's representation theorem we may assume pointwise convergence. By \eqref{eq:dominate1}--\eqref{eq:dominate3} and Assumption \ref{ass:piconverges}, $(X_n)_{\norm{E}} Y_n^{|E|}$ is dominated by a random variable $Z_n$ (e.g., the right-hand side of \eqref{eq:dominate1}) such that $Z_n \to Z$ pointwise and $\E(Z_n) \to \E(Z) < \infty$. Thus, $(\pi_n)_{\norm{E}, |E|} \to (\pi)_{\norm{E}, |E|}$ by dominated convergence (see e.g.\ \cite{Kallenberg_2002}). From this we also conclude that $(\pi_n)_{\norm{E}, |E|} \lesim 1$ for all $E \subset E(K_r)$.

With $(\pi_n)_{\norm{E}, |E|} = O(1)$ established, the remaining conditions  are straightforward to verify for $K_r$. By assumption, $(\pi)_{r,\binom{r}{2}}>0$ holds, so it follows that $q_n = \Theta(1)$ (satisfying the condition $q_n \gg n^{-1/s}$) and
\[
\max_{E \subset E(K_r)} \frac{(\pi_n)_{\norm{E},|E|}}{q_n^{|E|}} \weq O(1),
\]
and
\[
 \max_{0 \le b \le 2s} \frac{(\pi_n)_{0,b}}{q_n^b} \le  \max_{0 \le b \le 2s} q_n^{-b} \weq O(1).
\]
The argument for $C_r$ is identical. Thus, the conditions of Theorem~\ref{thm:mainvariance} are satisfied, yielding $N_R(G_n) = (1+o_\pr(1)) \E N_R (G_n)$ for $R=K_r, C_r$.
\end{proof}

\begin{proof}[Proof of Theorem~\ref{the:sizeconverges}]
We prove the claim for $K_r$, as the proof remains identical for $C_r$.
The argument for $f(K_r) = (1+O(n^{-1}q))U(K_r)$ is the same as in the proof of Theorem \ref{the:piconverges}.
For the expression of the expected value we refer to \eqref{eq:expectednkr} and note that
\begin{equation}
\label{eq:pmoment}
(\pi_n)_{r,\binom{r}{2}} = \E[(X_n)_r Y_n^{\binom{r}{2}}] = \E[(X_n)_r] p_n^{\binom{r}{2}}.
\end{equation}
We now verify the conditions of Theorem \ref{thm:mainvariance}. The condition $q_n \gg n^{-1/s}$ is satisfied for $s=\binom{r}{2}$, directly by the assumptions $n p_n^{\binom{r}{2}} \gg 1$ and $\E[(X_n)_r] = \Theta(1)$, and \eqref{eq:pmoment}. 
As for the other conditions, we have
\[
 \frac{(\pi_n)_{\norm{E},|E|}}{q_n^{|E|}}  \weq \Theta(1) \frac{\E[(X_n)_{\norm{E}}] p_n^{|E|} }{\E[(X_n)_r]^{|E|/s} p_n^{|E|}} \weq O(1), 
\]
where we used the fact that $\norm{E} \le r$, and
\[
(\pi_n)_{0,2s} \weq p_n^{2s} \wlesim p_n^{2s}\E[(X_n)_r]^2 \weq (\pi_n)_{r,s}^2.
\]
Finally, the random variables $(X_n)_{\norm{E}}$ are uniformly integrable because $(X_n)_r$ are uniformly integrable and
$(X_n)_{\norm{E}} \le r! + (X_n)_r$. Thus,
\[
\sup_n (\pi_n^M)_{\norm{E}, \abs{E}} \wle \sup_n \E[(X_n)_{\norm{E}} \I(X_n>M)] \to 0
\]
as $M \to \infty$. The claim now follows by Theorem \ref{thm:mainvariance}.
\end{proof}

\section*{Acknowledgements}
This work was supported by COSTNET COST Action CA15109. JK was supported by the Magnus Ehrnrooth Foundation and Academy of Finland grant 346311 -- Finnish Centre of Excellence in Randomness and Structures.

\bibliographystyle{abbrvnat}
\bibliography{lslReferences}

\end{document}